%%%%%%%%%%%%%%%%%%%%%%% file template.tex %%%%%%%%%%%%%%%%%%%%%%%%%
%
% This is a general template file for the LaTeX package SVJour3
% for Springer journals.   Springer Heidelberg 2010/09/16
%
% Copy it to a new file with a new name and use it as the basis
% for your article. Delete % signs as needed.
%
% This template includes a few options for different layouts and
% content for various journals. Please consult a previous issue of
% your journal as needed.
%
%%%%%%%%%%%%%%%%%%%%%%%%%%%%%%%%%%%%%%%%%%%%%%%%%%%%%%%%%%%%%%%%%%%
%
% First comes an example EPS file -- just ignore it and
% proceed on the \documentclass line
% your LaTeX will extract the file if required
% [arxiv_v2: filecontents example.eps stripped, 199 chars]
\RequirePackage{fix-cm}
\documentclass[smallextended]{svjour3}  % onecolumn (second format)
\smartqed % flush right qed marks, e.g. at end of proof
\usepackage{filecontents}
\usepackage{graphicx}
% insert here the call for the packages your document requires
%\usepackage{latexsym}
% \journalname{myjournal}
%
\usepackage{enumitem}
\usepackage{amsmath}
\usepackage{amssymb}
\usepackage{pifont}
\usepackage{theorem} 
\usepackage{euscript}
\usepackage{exscale,relsize}
\usepackage{epic,eepic}
\usepackage{multicol}
\usepackage{makeidx}
\usepackage{srcltx}   %for forward references in xdvi
\usepackage{xcolor}
\usepackage{url}
\usepackage{charter}
\usepackage{mathrsfs} %pour mathscr
\usepackage{graphicx}
\usepackage{authblk}
\usepackage{tikz}
\usepackage[normalem]{ulem}  % \sout \xout \uline \uuline
%-----------------------------------------------------------------
\usepackage[usenames,dvipsnames]{pstricks}
\usepackage{pstricks-add}
\usepackage{pst-grad}
\usepackage{pst-plot} 
\usepackage{pst-eucl} 

%------------------------------- HYPERLINKS ----------------------
%http://tug.ctan.org/cgi-bin/ctanPackageInformation.py?id=hyperref
\definecolor{labelkey}{rgb}{0,0.08,0.45}
\definecolor{refkey}{rgb}{0,0.6,0.0}
\definecolor{Brown}{rgb}{0.45,0.0,0.05}
\definecolor{dbrown}{rgb}{0.50,0.25,0.00}
\definecolor{dgreen}{rgb}{0.00,0.49,0.00}
\definecolor{dblue}{rgb}{0,0.08,0.75}
\RequirePackage[colorlinks,hyperindex]{hyperref}
\hypersetup{linktocpage=true,citecolor=dblue,linkcolor=dgreen}
%---------------------------------------------------------------
\tolerance 2500

%----------------------------------------------------------------
\renewcommand{\leq}{\ensuremath{\leqslant}}
\renewcommand{\geq}{\ensuremath{\geqslant}}

\newcommand{\minimize}[2]{\ensuremath{\underset{\substack{{#1}}}%
{\text{minimize}}\;\;#2 }}

\newcommand{\menge}[2]{\big\{{#1}~\big |~{#2}\big\}} 
 
\newcommand{\HHH}{\ensuremath{\boldsymbol{\mathcal H}}}

\newcommand{\Sum}{\ensuremath{\displaystyle\sum}}
\newcommand{\emp}{\ensuremath{{\varnothing}}}
\newcommand{\Id}{\ensuremath{\operatorname{Id}}\,}

\newcommand{\RR}{\ensuremath{\mathbb{R}}}
\newcommand{\RP}{\ensuremath{\left[0,+\infty\right[}}
\newcommand{\RO}{\ensuremath{\left]1,+\infty\right[}}

\newcommand{\RPP}{\ensuremath{\left]0,+\infty\right[}}

\newcommand{\RX}{\ensuremath{\left]-\infty,+\infty\right]}}

\newcommand{\NN}{\ensuremath{\mathbb N}}

\newcommand{\pinf}{\ensuremath{{+\infty}}}

\newcommand{\dom}{\ensuremath{\text{\rm dom}\,}}

\newcommand{\prox}{\ensuremath{\text{\rm prox}}}
\newcommand{\proj}{\ensuremath{\text{\rm proj}}}

\newcommand{\sign}{\ensuremath{\text{\rm sign}}}

\newcommand{\reli}{\ensuremath{\text{\rm ri}\,}}

\newcommand{\zeroun}{\ensuremath{\left]0,1\right[}}

%-----------------------------------------------------------------
%width0.6pt\hskip0.8ex\vrule width0.6pt}\hrule height0.6pt}}
%-----------------------------------------------------------------
%\newtheorem{theorem}{Theorem}[section]
%\newtheorem{lemma}[theorem]{Lemma}
%\newtheorem{corollary}[theorem]{Corollary}
%\newtheorem{proposition}[theorem]{Proposition}
%\newtheorem{assumption}[theorem]{Assumption}
%\theoremstyle{plain}{\theorembodyfont{\rmfamily}%
%\newtheorem{conjecture}[theorem]{Conjecture}}
%\theoremstyle{plain}{\theorembodyfont{\rmfamily}%
%\newtheorem{example}[theorem]{Example}}
%\theoremstyle{plain}{\theorembodyfont{\rmfamily}%
%\newtheorem{remark}[theorem]{Remark}}
\theoremstyle{plain}{\theorembodyfont{\rmfamily}%
\newtheorem{algorithm}[theorem]{Algorithm}}
\theoremstyle{plain}{\theorembodyfont{\rmfamily}%
\newtheorem{model}[theorem]{Model}}

\setlength\leftmargini  {2.2em}
\begin{document}

\title{Regression models for compositional data: 
General log-contrast formulations, proximal optimization, 
and microbiome data applications 
\thanks{The work of P. L. Combettes was supported by the 
National Science Foundation under grant DMS-1818946.}
}
%\subtitle{}

\titlerunning{Log-contrast regression for compositional 
data analysis}  % if too long for running head

\author{Patrick L. Combettes  \and Christian L. M\"uller %etc.
}

%\authorrunning{Short form of author list} % if too long head

\institute{P. L. Combettes \at
Department of Mathematics, North Carolina State University,
Raleigh, NC 27695-8205, USA\\
Tel.: +1 919 515 2671\\
\email{plc@math.ncsu.edu}   % \\
\and
C. L. M\"uller \at
Center for Computational Mathematics, Flatiron Institute, New York,
NY 10010, USA \\
Tel.: +1 646 603 2716\\
\email{cmueller@flatironinstitute.org}
}

\date{Received: date / Accepted: date}
% The correct dates will be entered by the editor

\maketitle

\begin{abstract}
Compositional data sets are ubiquitous in science, including 
geology, ecology, and microbiology. 
In microbiome research, compositional data primarily 
arise from high-throughput sequence-based profiling 
experiments. These data comprise microbial compositions in 
their natural habitat and are often paired with covariate 
measurements 
that characterize physicochemical habitat properties or
the physiology of the host. Inferring parsimonious statistical
associations between microbial compositions and habitat- or
host-specific covariate data is an important step in exploratory
data analysis. A standard statistical model linking compositional
covariates to continuous outcomes is the linear log-contrast model.
This model describes the response as a linear combination of
log-ratios of the original compositions and has been extended 
to the high-dimensional setting via regularization. In
this contribution, we propose a general convex optimization model 
for linear log-contrast regression which includes many previous
proposals as special cases. We introduce a proximal algorithm that
solves the resulting constrained optimization problem exactly
with rigorous convergence guarantees. 
We illustrate the versatility of our approach by
investigating the performance of several model instances on
soil and gut microbiome data analysis tasks. 

\keywords{compositional data \and 
convex optimization \and log-contrast model 
\and  microbiome 
\and perspective function \and proximal algorithm}
% \PACS{PACS code1 \and PACS code2 \and more}
% \subclass{MSC code1 \and MSC code2 \and more}
\end{abstract}

\section{Introduction}
\label{intro}
Compositional data sets are ubiquitous in many areas of science,
spanning such disparate fields as geology and ecology. In
microbiology, compositional data arise from high-throughput
sequence-based microbiome profiling techniques, such as 
targeted amplicon
sequencing (TAS) and metagenomic profiling. These methods generate 
large-scale genomic survey data of microbial community compositions 
in their natural habitat, ranging from marine ecosystems to 
host-associated environments. Elaborate bioinformatics processing 
tools \cite{Callahan2015,Caporaso2010,Edgar2013,%
Lagkouvardos2017,Schloss2009} 
typically summarize TAS-based sequencing reads into sparse
compositional counts of operational taxonomic units (OTUs). The
quantification of the relative abundances of OTUs in the
environment is often accompanied by measurements of other
covariates, including physicochemical properties of the underlying
habitats, variables related to the health status of the host, or
those coming from other high-throughput protocols, such as
metabolomics or flow cytometry.

An important step in initial exploratory analysis of such data sets
is to infer parsimonious and robust statistical relationships
between the microbial compositions and habitat- or host-specific
measurements. Standard linear regression modeling cannot be
applied in this context because the microbial count data carry
only relative or compositional information. One of the most popular
approaches to regression modeling with compositional covariates is
the log-contrast regression model, originally proposed in
\cite{Aitchison1984} in the context of experiments with mixtures.
The linear log-contrast model expresses the continuous outcome of
interest as a linear combination of the log-transformed
compositions subject to a zero-sum constraint on the regression
vector. This leads to the intuitive interpretation of the response as
a linear combination of log-ratios of the original compositions. In
a series of papers, the linear log-contrast model 
has been generalized to the 
high-dimensional setting via regularization. 
The sparse linear log-contrast model, introduced in \cite{Lin2014}, 
considers variable selection via $\ell^1$ regularization and has been
extended (i) to multiple linear constraints for sub-compositional
coherence across predefined groups of predictors \cite{Shi2016};
(ii) to sub-composition selection via tree-structured
sparsity-inducing penalties \cite{Wang2017}; (iii) to longitudinal
data modeling via a constraint group lasso penalty \cite{Sun2018};
and (iv) to outlier detection via a mean shift modeling approach
\cite{Mishra2019}. A common theme of these statistical approaches to 
log-contrast modeling is the formulation of the estimators as the 
solution of a convex optimization problem, and the 
theoretical analysis of the statistical properties of these
estimators under suitable assumptions on the data. 

In the present paper, we take a complementary approach and focus on
the structure of the optimization problems underlying log-contrast
modeling. We propose an general optimization model for linear
log-contrast regression which includes previous proposals as
special cases and allows for a number of novel formulations that
have not yet been explored. A particular feature of our model is
the joint estimation of regression vectors and associated scales
for log-contrast models, similar to the scaled Lasso approach in
high-dimensional linear regression \cite{Sun2012}. This is achieved
by leveraging recent results on the connection between
perspective functions and statistical models
\cite{Svva18,Combettes2018a,Combettes2018b}. We introduce a
Douglas-Rachford splitting algorithm that produces an exact solution 
to the resulting constrained optimization problems with 
rigorous guarantees on the convergence of the iterates. 
By contrast, most existing approaches to solve such problems 
proceed by first approximating it and then employing 
coordinate descent methods with less demanding convergence
guarantees. We illustrate the versatility
of our modeling approach by applying novel log-contrast model
instances to environmental and gut microbiome data analysis tasks.

\section{Linear log-contrast models}
\label{sec:llm}
We first introduce the statistical log-contrast data formation
model under consideration. We then review several prominent
estimators for regularized log-contrast regression models. 

\subsection{Statistical log-contrast data formation model}
\label{subsec:fwrdm}

Let $Z$ be a known $(n\times p)$-dimensional compositional design 
matrix with rows $(z_i)_{1\leq i\leq n}$ in the simplex 
$\menge{(\zeta_1,\ldots,\zeta_p)\in\RPP^p}{\sum_{k=1}^p\zeta_k=1}$.
In the microbiome context, each
row represents a composition of $p$ OTUs or components at a higher
taxonomic rank. We apply a log transform $(x_i)_{1\leq i\leq n}=
(\log{z_i})_{1\leq i\leq n}$ resulting in the design matrix 
$X\in\RR^{n\times p}$. In this context, we introduce the following
log-contrast data formation model. 

\begin{model}
\label{m:1}
The vector $y=(\eta_i)_{1\leq i\leq n}\in\RR^n$ of observations is 
\begin{equation}
\label{llcm:1}
y=X\overline{b}+\overline{o}+Se,\quad\text{with}\quad 
C^{\top}\overline{b}=0, 
\end{equation}
where $X\in\RR^{n\times p}$ is the aforementioned design matrix
with rows
$(x_i)_{1\leq i\leq n}$, $\overline{b}\in\RR^p$ is the unknown
regression vector (location), $\overline{o}\in\RR^n$ is the unknown
mean shift vector containing outliers, $e\in\RR^n$ is a vector of
realizations
of i.i.d. zero mean random variables, $S\in\RP^{n\times n}$ is
a diagonal matrix the diagonal of which are the (unknown)
standard deviations, and $C\in\RR^{p\times K}$ is
a matrix expressing $K$ linear constraints on the regression vector. 
\end{model}

The linear log-contrast data formation model is similar to the
standard (heteroscedastic) linear model with the important
difference that there are linear equality constraints on the
regression vector. This stems from the fact that the entries in
$X\in\RR^{n\times p}$ are not independent due to the compositional
nature. In the original model \cite{Aitchison1984}, the constraint
matrix $C\in\RR^{p\times K}$ is the $p$-dimensional all-ones
vector $\mathbf{1}_p$, resulting in a zero-sum constraint on the
regression vector. To gain some intuition about the implications
of this constraint, consider a two-dimensional example with given
estimates $b=(\beta_1,\beta_2)$, and denote by $\xi_{i,1}$ and
$\xi_{i,2}$ the first and second column entries of $X$. The linear
equality constraint enforces $\beta_2=-\beta_1$, and thus each
observation can be expressed as
\begin{equation}
\label{llcm:2a}
\eta_i= \beta_1 \xi_{i,1}-\beta_1 \xi_{i,2} \,.
\end{equation}
Due to the construction of the design matrix as the log
transformation of the compositions, this model is equivalent to
\begin{equation}
\label{llcm:2b}
\eta_i= \beta_1 \log{\zeta_{i,1}}-\beta_1 \log{\zeta_{i,2}}=
\beta_1 \log{\frac{\zeta_{i,1}}{\zeta_{i,2}}} \,,
\end{equation}
which expresses the response as a linear function of the log-ratios
of the original compositional components. This example also shows
that the regression coefficients in the log-contrast model bear a
different interpretation than in the standard linear model.
Combined log-ratio coefficients relate the response to log-fold
changes of the corresponding component ratios. 

\subsection{Statistical estimators for log-contrast models}

\subsubsection{Sparse log-contrast regression}
In the low-dimensional setting, the standard log-contrast model
with zero-sum constraints can be estimated by solving a
least-squares problem subject to a linear constraint, or
alternatively, via standard linear regression applied to
isometrically log-ratio transformed compositions \cite{Hron2012}.
In the high-dimensional setting, we need structural
assumptions on the regression vector for consistent estimation. To
this end, the sparse log-contrast model was introduced in
\cite{Lin2014}. It is based on the optimization problem
\begin{equation}
\label{e:lin14}
\minimize{\substack{b\in\RR^p\\ {\sum_{k=1}^p}
\beta_k=0}}{\frac{1}{2n}\|X b-y\|_2^2+\lambda \|b\|_1},
\end{equation}
where $\|\cdot\|_1$ is the $\ell^1$ norm and $\lambda\in\RP$ is a
tuning parameter that balances model fit and sparsity of the
solution. The estimator enjoys several desirable properties,
including scale invariance, permutation invariance, and selection
invariance. The latter property is intimately related to the
principle of sub-compositional coherence \cite{Aitchison1986} and
means that the estimator is unchanged if one knew in advance the
sparsity pattern of the solution and applied the procedure to the
sub-compositions formed by the nonzero components. In
\cite{Lin2014}, model consistency guarantees are derived for
the estimator and the underlying optimization
problem is approached via penalization. The proposed iterative
algorithm alternates between estimating the Lagrange multipliers
and solving a convex subproblem with a coordinate descent strategy.
Model selection for the regularization parameter $\lambda$ is
performed with a generalized information criterion. 

\subsubsection{Sparse log-contrast regression with side information}
In many situations, it is desirable to incorporate side information
about the covariates into log-contrast modeling. For instance, for
microbial compositions, each component can be associated with
taxonomic or phylogenetic information, thus relating the $p$
components through a rooted taxonomic or phylogenetic tree
$\mathcal{T}_{p}$. One way to use this hierarchical tree
information is to perform log-contrast regression at a higher
taxonomic level, effectively reducing the dimensionality of the
regression problem. Let $\mathcal{T}_{p}$ be a tree with $1\leq
i_h \leq h$ levels and $p$ leaves and assume that, at given level
$i_h$, the $p$ compositions split into $K$ groups with sizes
$(p_k)_{1\leq k \leq K}$. Sub-compositional coherence across the
groups can be expressed by the linear constraints $C^{\top}b=0$,
where $C$ is an orthogonal $(p\times K)$-dimensional binary matrix.
The $k^{th}$ column comprises $p_k$ ones at the coordinates of the
components that belong to the $k^{th}$ group. Sparse log-contrast
regression with group-level compositional coherence can thus be
achieved by solving the optimization problem
\begin{equation}
\label{e:shi16}
\minimize{\substack{b\in\RR^p\\ C^{\top}b=0}}
{\frac{1}{2n}\|X b-y\|_2^2+\lambda\|b\|_1},
\end{equation}
where $\lambda\in\RP$ is a tuning parameter. In \cite{Shi2016},
model consistency guarantees are derived for this estimator
as well as a debiasing procedure for the final estimates. This is
done by extending results from \cite{Javanmard2014} to the
log-contrast 
setting. In \cite{Lin2014}, the underlying
optimization problem is approached via an augmented 
Lagrangian approach,
while model selection is achieved by scaling a theoretically
derived $\lambda_0$ with a data-driven heuristic estimate of the
standard deviation $\sigma$ \cite{Sun2012}, resulting in $\lambda
=\lambda_0\sigma$. 

An alternative way of incorporating tree
information has been proposed in \cite{Wang2017}. There, the tree
structure is encoded in a parameterized matrix $J_{\alpha} \in
\RR^{{m-1}\times p}$, where $m$ is the number of vertices in the
tree. An estimator based on the minimization problem
\begin{equation}
\label{e:wang17}
\minimize{\substack{b\in\RR^p\\ \sum_{k=1}^p
\beta_k=0}}{\frac{1}{2n}\|X b-y\|_2^2+\lambda
\|J_{\alpha} b\|_1}
\end{equation}
is proposed, where $\lambda\in\RP$ is a tuning parameter. 
The structure of
$J_{\alpha}$ promotes tree-guided sparse sub-composition selection
and comprises a weighting parameter $\alpha \in [0,1]$. The authors
of \cite{Wang2017}
are unable to solve the optimization in \eqref{e:wang17}
exactly and resort to a heuristic that abandons the linear
constraints and solves a generalized Lasso problem instead. The two
tuning parameters $\lambda$ and $\alpha$ are selected via an
information criterion. 

\subsubsection{Robust log-contrast regression}
The previous estimators assume the response to be outlier-free with
respect to the statistical model under consideration. One way to
relax this assumption and to guard against outliers in the response
is to use a robust data fitting term. In \cite{Mishra2019}, the
robust log-contrast regression is introduced via mean shift
modeling; see, e.g., \cite{Antoniadis2007,She2011}. One specific
instance of this framework considers the estimation
problem
\begin{equation}
\label{e:mishra19}
\minimize{b\in\RR^p,\, o\in\RR^n} {\frac{1}{2n}\|X b-y-o\|_2^2 +
\lambda_1 \|b\|_1+\lambda_2 \|o\|_1}, \quad\text{where}\quad
C^{\top}b=0,
\end{equation}
and where nonzero elements in the mean shift vector $o\in\RR^n$
capture outlier data, and $\lambda_1$ and $\lambda_2$ are tuning
parameters. In \cite{Nguyen2013a}, the objective function in
\eqref{e:mishra19} is approximated in the form of
\eqref{e:shi16} with a single tuning parameter. 
As shown in \cite{Antoniadis2007} for partial
linear models and \cite{She2011} for outlier detection, an
equivalent form of \eqref{e:mishra19} is to use the Huber
function \cite{Huber1964} as robust data fitting function and the
$\ell^1$ norm as regularizer. The Huber function is defined as
\begin{equation}
\label{e:huber}
h_{\rho}\colon\RR\to\RR\colon u\mapsto
\begin{cases}
\rho|u|-\dfrac{\rho^2}{2},&\text{if}\;\;|u|>\rho;\; \\
\dfrac{|u|^2}{2}, &\text{if}\;\; |u|\leq\rho\,,\\
\end{cases}
\end{equation}
where $\rho \in \RO$ is a fixed parameter with default value
$\rho=1.345$ that determines the transition from the quadratic to
the linear part. The model in \eqref{e:mishra19} can be
written as 
\begin{equation}
\label{e:pcm19}
\minimize{\substack{b\in\RR^p\\ C^{\top}b=0}}
{\frac{1}{2n}\sum_{i=1}^{n} h_{\rho}(x_i b-\eta_i)
+\lambda_1 \|b\|_1}.
\end{equation}
After model estimation, each data point in the linear region of the
Huber function is considered an outlier. The latter two models
thus allow for joint sparse selection of predictors and outliers in a
convex framework.

\section{Optimization of general log-contrast models}
\label{sec:3}
We introduce an optimization model for general log-contrast
regression that includes all previous examples as special cases. We
assume that the data follow the data formation model outlined in
Model~\ref{m:1}. Our model belongs to the class of perspective
M-estimation models \cite{Combettes2018b} and allows for joint
estimation of regression parameters and corresponding scales while
preserving the overall convexity of the model. We then present a
proximal algorithm that can solve instances of the
optimization model with theoretical guarantees on the convergence
of the iterates. Finally, we propose two model selection schemes for
practical regularization parameter selection that leverage the joint
scale estimation capability of our optimization model.

\subsection{Convex optimization model}
\label{subsec:optmodel}
Let us first introduce some notation (see \cite{Livre1,Rock70} for
details). We denote by $\Gamma_0(\RR^n)$ the class of lower 
semicontinuous convex functions $\varphi\colon\RR^n\to\RX$ such 
that $\dom\varphi=\menge{x\in\RR^n}{\varphi(x)<\pinf}\neq\emp$.
Given $\varphi\in\Gamma_0(\RR^n)$ and $x\in\RR^n$, the unique 
minimizer of $\varphi+\|x-\cdot\|_2^2/2$ is
denoted by $\prox_\varphi x$.
Now let $D$ be a convex subset of $\RR^n$. Then
$\iota_D$ is the indicator function of $D$ (it takes values 
$0$ on $D$ and $\pinf$ on its complement), $\reli D$ is the relative
interior of $D$ (its interior relative to its affine hull), and, if
$D$ is nonempty and closed, $\proj_D=\prox_{\iota_D}$ is the
projection operator onto $D$. 

The following general log-contrast optimization model enables the
joint estimation of the regression vector
$\overline{b}=(\overline{\beta}_k)_{1\leq k\leq p}\in\RR^p$ and of
the scale vector 
$\overline{s}=(\overline{\sigma}_i)_{1\leq i\leq N}\in\RR^N$
in Model~\ref{m:1} within a convex optimization setting.

\begin{problem}
\label{prob:1}
Consider the setting of Model~\ref{m:1}.
Let $N$ and $M$ be strictly positive integers,
let $D$ be a vector subspace of $\RR^N$,
let $(n_i)_{1\leq i\leq N}$
be strictly positive integers such that $\sum_{i=1}^Nn_i=n$, 
let $(m_i)_{1\leq i\leq M}$ be strictly positive integers, and set
$m=\sum_{i=1}^M{m_i}$.
For every $i\in\{1,\ldots,N\}$, let 
$\varphi_i\in\Gamma_0(\RR^{n_i})$, let 
\begin{equation}
\label{e:perspective}
\begin{array}{rl}
\widetilde{\varphi}_i\colon
\RR\times\RR^{n_i}&\to\RX\\[2mm]
(\sigma_i,u_i)~~&\mapsto
\begin{cases}
\sigma_i\varphi_i(u_i/\sigma_i),&\text{if}\;\:\sigma_i>0;\\
\displaystyle{\sup_{u\in\dom\varphi_i}}
\big(\varphi_i(u+u_i)-\varphi_i(u)\big),
&\text{if}\;\:\sigma_i=0;\\
\pinf,&\text{if}\;\;\sigma_i<0
\end{cases}
\end{array}
\end{equation}
be the perspective of $\varphi_i$,
let $X_i\in\RR^{n_i\times p}$,
and let $y_i\in\RR^{n_i}$ be such that 
\begin{equation}
\label{e:blocks}
X=
\begin{bmatrix}
X_1\\
\vdots\\
X_N\\
\end{bmatrix}
\quad\text{and}\quad
y=
\begin{bmatrix}
y_1\\
\vdots\\
y_N\\
\end{bmatrix}.
\end{equation}
Finally, set
\begin{equation}
\label{e:E}
E=\menge{b\in\RR^p}{C^\top b=0}
\end{equation}
and, for every $i\in\{1,\ldots,M\}$, let 
$\psi_i\in\Gamma_0(\RR^{m_i})$ and 
$L_i\in\RR^{m_i\times p}$.
The objective is to
\begin{equation}
\label{e:prob0}
\minimize{s\in D,\:b\in E}
{\Sum_{i=1}^{N}\widetilde{\varphi}_i
\big(\sigma_i,X_ib-y_i\big)+\Sum_{i=1}^{M}
\psi_i\big(L_i b\big)}.
\end{equation}
\end{problem}

\begin{remark}
\label{r:prob1}
Problem~\ref{prob:1} comprises four main components which
are associated with different aspects of the general log-contrast 
regression model. 
\begin{itemize}
\item
The perspective functions $(\widetilde{\varphi}_i)_{1\leq i\leq N}$
play the role of the loss function in statistical estimation and
couple the estimation of the scale vector $s$ and the regression
vector $b$. Because the functions $({\varphi}_i)_{1\leq i\leq N}$
are convex, the overall minimization problem \eqref{e:prob0}
remains a convex one in $(s,b)$.
\item
Problem~\ref{prob:1} allows for the partitioning of the design matrix
$X$ and response $y$ into $N$ blocks with individual scale
parameters $(\sigma_i)_{1\leq i\leq N}$. This is beneficial when
data from multiple measurement sources are available for the
prediction of the response or when heteroscedasticity in the design
matrix is expected for different groups of measurements.  
Introducing multiple scales has also numerical
advantages. Indeed, as discussed in \cite{Combettes2018b}, 
certain proximity operators of perspective functions are easier 
to compute in separable form.
\item
The vector subspaces $D$ and $E$ (see \eqref{e:E}) enforce linear 
constraints on the scale vector $s=(\sigma_i)_{1\leq i\leq N}$ 
and the regression vector $b$, respectively.
\item
Additional properties of the regression vector, such as (structured) 
sparsity, are promoted through the use of the penalization functions 
$(\psi_i)_{1\leq i\leq M}$ and the matrices $(L_i)_{1\leq i\leq M}$.
The penalization functions typically contain a free parameter
$\lambda$ the setting of which requires a model selection
strategy. 
\end{itemize}
\end{remark}

Perspective functions are discussed in
\cite{Livre1,Svva18,Combettes2018a,Combettes2018b,Rock70}. The 
construction \eqref{e:perspective} guarantees that 
$(\forall i\in\{1,\ldots,N\})$
$\widetilde{\varphi}_i\in\Gamma_0(\RR^{n_i})$.
We provide below two examples of perspective functions that will be
used in the numerical investigations of Section~\ref{sec:4}.

\begin{figure}[h]
\centering
\includegraphics[width=12cm]{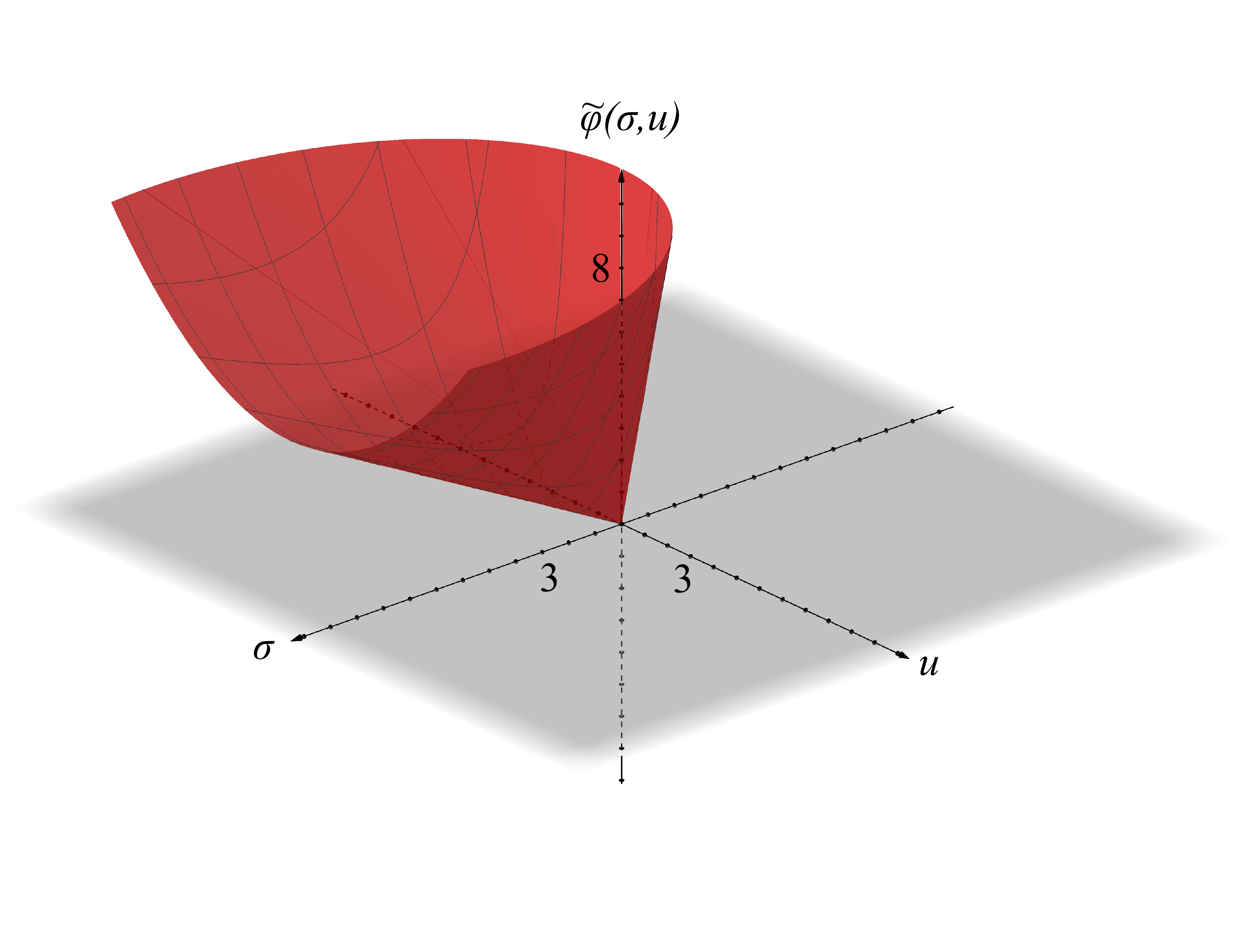}
\caption{Perspective of $\varphi=|\cdot|^2+1/2$.}
\label{fig:1}
\end{figure}

\begin{example}
\label{ex:1}
Consider the function $\varphi=\|\cdot\|_2^2+1/2$ defined on
the standard Euclidean space $\RR^P$. Then 
\eqref{e:perspective} yields (see Fig.~\ref{fig:1})
\begin{equation}
\label{e:perspective1}
\begin{array}{rl}
\widetilde{\varphi}\colon
\RR\times\RR^P&\to\RX\\[2mm]
(\sigma,u)~~&\mapsto
\begin{cases}
\dfrac{\sigma}{2}+\dfrac{\|u\|_2^2}{\sigma},
&\text{if}\;\:\sigma>0;\\
0,&\text{if}\;\:\sigma=0\;\:\text{and}\;\:u=0;\\
\pinf,&\text{otherwise.}
\end{cases}
\end{array}
\end{equation}
Now fix $(\sigma,u)\in\RR\times\RR^P$ and $\gamma\in\RPP$.
If $4\gamma\sigma+\|u\|_2^2>2\gamma^2$, 
let $t$ be the unique solution in $\RPP$ to the equation 
\begin{equation}
\label{e:10f}
\gamma t^3+2(2\sigma+3\gamma)t-8\|u\|_2=0,
\end{equation}
and set $p=tu/\|u\|_2$ if $u\neq 0$, and $p=0$ if $u=0$.
Then \cite[Example~2.4]{Combettes2018b} yields
\begin{equation}
\label{e:10b}
\prox_{\gamma\widetilde{\varphi}}(\sigma,u)=
\begin{cases}
\bigg(\sigma+\dfrac{\gamma}{2}\bigg(\dfrac{t^2}{2}-1\bigg),
u-\gamma p\bigg),
&\text{if}\;\;4\gamma\sigma+\|u\|_2^2>2\gamma^2;\\
(0,0),&\text{if}\;\;4\gamma\sigma+\|u\|_2^2\leq2\gamma^2.
\end{cases}
\end{equation}
\end{example}

\begin{figure}[h]
\centering
\includegraphics[width=12cm]{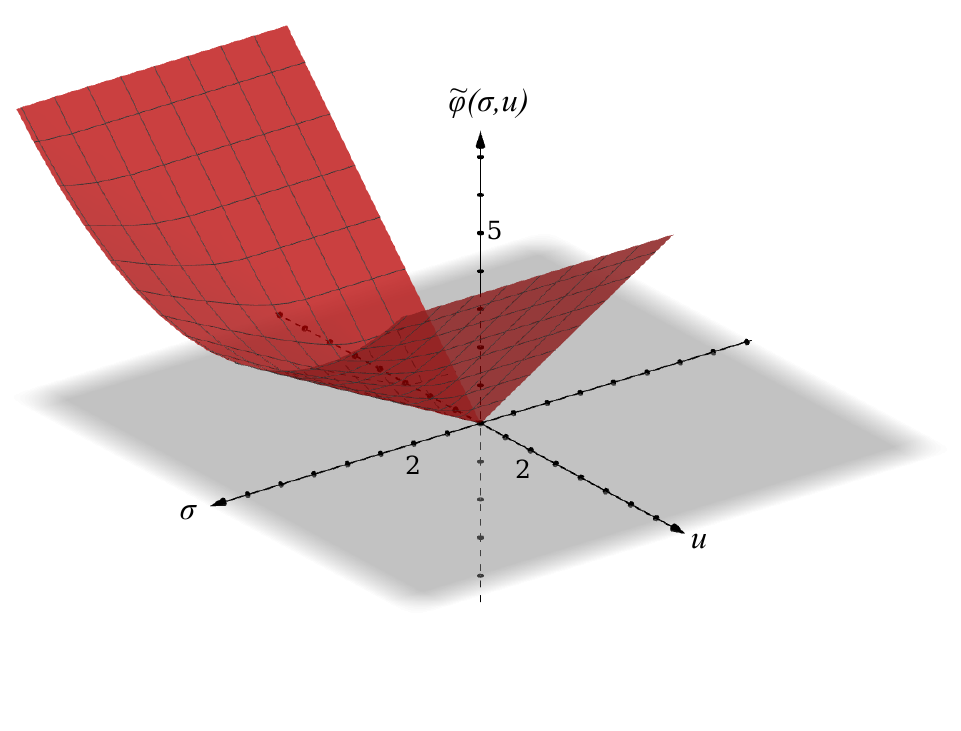}
\caption{Perspective of 
$\varphi=h_1+1/2$, where $h_1$ is the Huber function.}
\label{fig:2}
\end{figure}

A prominent estimator where the perspective function
\eqref{e:perspective1} is used as a loss function in conjunction
with the $\ell^1$ norm as penalization function is the scaled Lasso
estimator for high-dimensional sparse linear regression
\cite{Sun2012}. 

\begin{example}
\label{ex:2}
Set $\varphi=h_1+1/2$, where $h_1$ is the Huber function of 
\eqref{e:huber}. Then \eqref{e:perspective} yields 
(see Fig.~\ref{fig:2})
\begin{equation}
\label{e:perspective2}
\begin{array}{rl}
\widetilde{\varphi}\colon
\RR\times\RR&\to\RX\\[2mm]
(\sigma,u)&\mapsto
\begin{cases}
\dfrac{(1-\rho^2)\sigma}{2}+\rho|u|,
&\text{if}\;\:|u|>\sigma\rho\;\text{and}\;\sigma>0;\\[2mm]
\dfrac{\sigma}{2}+\dfrac{|u|^2}{2\sigma},
&\text{if}\;\:|u|\leq\sigma\rho\;\text{and}\;\sigma>0;\\
\rho|u|,&\text{if}\;\:\sigma=0;\\
\pinf,&\text{if}\;\;\sigma<0.
\end{cases}
\end{array}
\end{equation}
Now fix $(\sigma,u)\in\RR\times\RR$ and $\gamma\in\RPP$. 
Then \cite[Example~2.5]{Combettes2018b} asserts that
$\prox_{\gamma\widetilde{\varphi}}(\sigma,u)$ is
computed as follows.
\begin{enumerate}
\item
\label{ex:ghi}
Suppose that $|u|\leq\gamma\rho$ and 
$|u|^2\leq\gamma(\gamma-2\sigma)$. Then 
$\prox_{\gamma\widetilde{\varphi}}(\sigma,u)=(0,0)$.
\item
\label{ex:ghii}
Suppose that $\sigma\leq\gamma(1-\rho^2)/2$ and 
$|u|>\gamma\rho$. Then 
\begin{equation}
\label{e:gh3}
\prox_{\gamma\widetilde{\varphi}}(\sigma,u)=
\Bigg(0,\bigg(1-\dfrac{\gamma\rho}{|u|}\bigg)u\Bigg).
\end{equation}
\item
\label{ex:ghiii}
Suppose that $\sigma>\gamma(1-\rho^2)/2$ and 
$|u|\geq\rho\sigma+\gamma\rho(1+\rho^2)/2$. Then 
\begin{equation}
\label{e:gh4}
\prox_{\gamma\widetilde{\varphi}}(\sigma,u)=
\Bigg(\sigma+\dfrac{\gamma}{2}\big(\rho^2-1\big),
\bigg(1-\dfrac{\gamma\rho}{|u|}\bigg)u\Bigg).
\end{equation}
\item
\label{ex:ghiv}
Suppose that
$|u|^{2}>\gamma(\gamma-2\sigma)$ and 
$|u|<\rho\sigma+\gamma\rho(1+\rho^2)/2$.
If $u\neq 0$, let $t$ be the unique
solution in $\RPP$ to the equation
\begin{equation}
\label{e:hb11}
\gamma t^3+(2\sigma+\gamma)t-2|u|=0.
\end{equation}
Then 
\begin{equation}
\label{e:gh12}
\prox_{\gamma\widetilde{\varphi}}(\sigma,u)=
\begin{cases}
\big(\sigma+\gamma(t^2-1)/2,u-\gamma t\sign(u)\big),
&\text{if}\;\;2\gamma\sigma+|u|^2
>\gamma^2;\\
\big(0,0\big),&\text{if}\;\;2\gamma\sigma+
|u|^2\leq\gamma^2.
\end{cases}
\end{equation}
\end{enumerate}
\end{example}

Using the perspective function \eqref{e:perspective2} as a loss
function and the $\ell^1$ norm as a penalization function recovers
a robust version of the scaled Lasso approach 
\cite{Combettes2018b,Owen2007}.

\subsection{Algorithm}
Our algorithmic solution method to solve Problem~\ref{prob:1}
is patterned after that of \cite{Combettes2018b}, which relies on
an application of the Douglas-Rachford splitting algorithm in a
higher-dimensional space. To present the algorithm and its
convergence properties, it is convenient to introduce the matrices
\begin{equation}
\label{e:nancago12}
A=
\begin{bmatrix}
X_1\\
\vdots\\
X_N\\
L_1\\
\vdots\\
L_M
\end{bmatrix},
\quad
Q=A^\top(\Id+{AA}^\top)^{-1},
\quad\text{and}\quad
W=\Id-C(C^\top C)^{-1}C^\top,
\end{equation}
together with the function
\begin{equation}
\label{e:g}
\begin{array}{rl}
\mathsf{g}\colon
&\RR^N\times\RR^{n_1}\times\cdots\times\RR^{n_N}
\times\RR^{m_1}\times\cdots\times\RR^{m_M}\to\RX\\
&\big(s,u_1,\ldots,u_N,v_1,\ldots,v_M\big)
\mapsto{\Sum_{i=1}^{N}\widetilde{\varphi}_i
(\sigma_i,u_i-y_i)+\Sum_{i=1}^{M}\psi_i(v_i)},
\end{array}
\end{equation}
and to define, for every iteration index $k\in\NN$, the vectors
\begin{equation}
\label{e:trenet}
\begin{cases}
s_{k}=(\sigma_{1,k},\ldots,\sigma_{N,k})\in\RR^N\\
h_{s,k}=(\eta_{1,k},\ldots,\eta_{N,k})\in\RR^N\\
h_{b,k}=(h_{1,k},\ldots,h_{N,k},h_{N+1,k},\ldots,h_{N+M,k})\\
\hfill\in
\RR^{n_1}\times\cdots\times\RR^{n_N}\times\RR^{m_1}\times
\cdots\times\RR^{m_M}\\
z_{b,k}=(z_{1,k},\ldots,z_{N,k},z_{N+1,k},\ldots,z_{N+M,k})\\
\hfill\in
\RR^{n_1}\times\cdots\times\RR^{n_N}\times\RR^{m_1}\times
\cdots\times\RR^{m_M}\\
d_{s,k}=(\delta_{1,k},\ldots,\delta_{N,k})\in\RR^N\\
d_{b,k}=(d_{1,k},\ldots,d_{N,k},d_{N+1,k},\ldots,d_{N+M,k})\\
\hskip 33mm\in
\RR^{n_1}\times\cdots\times\RR^{n_N}\times\RR^{m_1}\times
\cdots\times\RR^{m_M}.
\end{cases}
\end{equation}

\begin{algorithm}
\label{algo:1}
Let $\gamma\in\RPP$, $\varepsilon\in\zeroun$, 
$x_{s,0}\in\RR^{N}$, $x_{b,0}\in\RR^{p}$, 
$h_{s,0}\in\RR^{N}$, and 
$h_{b,0}\in\RR^{n+m}$. 
Iterate
\begin{equation}
\label{e:9hyR09}
\begin{array}{l}
\text{for}\;k=0,1,\ldots\\
\left\lfloor
\begin{array}{l}
\mu_k\in[\varepsilon,2-\varepsilon]\\
q_{s,k}=x_{s,k}-h_{s,k}\\
q_{b,k}=Ax_{b,k}-{h}_{b,k}\\
s_k=x_{s,k}-q_{s,k}/2\\
b_k=x_{b,k}-Qq_{b,k}\\
c_{s,k}=\proj_D(2s_k-x_{s,k})\\
c_{b,k}=W(2b_k-x_{b,k})\\
x_{s,k+1}=x_{s,k}+\mu_k(c_{s,k}-s_k)\\
x_{b,k+1}=x_{b,k}+\mu_k(c_{b,k}-b_k)\\
\text{for}\;i=1,\ldots,N\\
\left\lfloor
\begin{array}{l}
z_{i,k}=X_ib_k\\
(\delta_{i,k},d_{i,k})
=(0,y_i)+\prox_{\gamma\widetilde{\varphi}_i}
(2\sigma_{i,k}-\eta_{i,k},2z_{i,k}-h_{i,k}-y_i)\\
\end{array}
\right.\\[2mm]
\text{for}\;i=1,\ldots,M\\
\left\lfloor
\begin{array}{l}
z_{N+i,k}=L_ib_k\\
d_{N+i,k}=\prox_{\gamma{\psi}_i}(2z_{N+i,k}-h_{N+i,k})\\
\end{array}
\right.\\[2mm]
h_{s,k+1}=h_{s,k}+\mu_k(d_{s,k}-s_k)\\
h_{b,k+1}=h_{b,k}+\mu_k(d_{b,k}-z_{b,k}).
\end{array}
\right.\\[2mm]
\end{array}
\end{equation}
\end{algorithm}

\begin{proposition}
\label{p:1}
Consider the setting of Problem~\ref{prob:1}.
Suppose that 
\begin{equation}
\label{e:coer}
\lim_{\substack{s\in D,\; b\in E\\
\|s\|_2+\|b\|_2\to\pinf}}\mathsf{g}(s,Ab)=\pinf
\end{equation}
and that
\begin{equation}
\label{e:cq}
\big(D\times A(E)\big)\cap\reli\dom\mathsf{g}\neq\emp.
\end{equation}
Then Problem~\ref{prob:1} has at least one solution. Now let 
$(s_k)_{k\in\NN}$ and $(b_k)_{k\in\NN}$ be sequences generated by
Algorithm~\ref{algo:1}. Then
$(s_k)_{k\in\NN}$ converges to some $s\in\RR^N$ and 
$(b_k)_{k\in\NN}$ converges to some $b\in\RR^p$ such that 
$(s,b)$ solves Problem~\ref{prob:1}.
\end{proposition}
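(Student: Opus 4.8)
The plan is to recast Problem~\ref{prob:1} as the minimization of a sum of two functions in $\Gamma_0$ of a product Euclidean space and to recognize Algorithm~\ref{algo:1} as the relaxed Douglas--Rachford splitting algorithm applied to that sum, following the pattern of \cite{Combettes2018b}. Set $\mathcal{H}=\RR^{N}\times\RR^{p}\times\RR^{N}\times\RR^{n_1}\times\cdots\times\RR^{n_N}\times\RR^{m_1}\times\cdots\times\RR^{m_M}$, with generic element $\mathbf{x}=(s,b,s',u_1,\ldots,u_N,v_1,\ldots,v_M)$, and write $w=(u_1,\ldots,u_N,v_1,\ldots,v_M)\in\RR^{n+m}$. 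Let $\mathsf{f}_1=\iota_D\oplus\iota_E\oplus\mathsf{g}$, where $\mathsf{g}$ is the function of \eqref{e:g} acting on the last $1+N+M$ blocks $(s',w)$, and let $\mathsf{f}_2=\iota_V$, where $V=\menge{\mathbf{x}\in\mathcal H}{s=s'\;\text{and}\;w=Ab}$ is a closed vector subspace, with $A$ as in \eqref{e:nancago12}. Since the constraints defining $V$ force $u_i=X_ib$, $v_i=L_ib$, and $s'=s$, minimizing $\mathsf{f}_1+\mathsf{f}_2$ over $\mathcal H$ is equivalent to Problem~\ref{prob:1}; in particular $\mathbf{x}$ minimizes $\mathsf{f}_1+\mathsf{f}_2$ iff its $(s,b)$-components solve Problem~\ref{prob:1}.

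First I would settle existence. Each $\widetilde\varphi_i$ lies in $\Gamma_0$ by the construction \eqref{e:perspective} (as recalled after Remark~\ref{r:prob1}), each $\psi_i$ lies in $\Gamma_0$, and $D,E$ are closed subspaces; hence the objective of Problem~\ref{prob:1}, extended by $\iota_{D\times E}$, is a lower semicontinuous convex function on $\RR^N\times\RR^p$. Condition \eqref{e:cq} furnishes a point $(s,b)\in D\times E$ with $(s,Ab)\in\dom\mathsf g$, so this extended objective is proper, while \eqref{e:coer} makes it coercive. A proper lower semicontinuous coercive function on a finite-dimensional space attains its infimum, so Problem~\ref{prob:1} has a solution; equivalently, $\operatorname{Argmin}(\mathsf{f}_1+\mathsf{f}_2)\neq\emp$.

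Next I would carry out the identification of \eqref{e:9hyR09} with the Douglas--Rachford recursion for $\mathsf{f}_1+\mathsf{f}_2$. Two ingredients are needed: (i) $\prox_{\gamma\mathsf{f}_1}$ is separable and reduces to $\proj_D$ on the $s$-block, to $W=\Id-C(C^\top C)^{-1}C^\top=\proj_E$ (the orthogonal projector onto $E=\ker C^\top$) on the $b$-block, and to the block proximity operators $\prox_{\gamma\widetilde\varphi_i}$ (with the translation by $y_i$ absorbed as in \eqref{e:g}) and $\prox_{\gamma\psi_i}$ of Examples~\ref{ex:1}--\ref{ex:2} on the $(s',w)$-blocks; (ii) $\proj_V$ decomposes into the projection onto the diagonal $\menge{(s,s')}{s=s'}$, which sends $(x_s,h_s)$ to $((x_s+h_s)/2,(x_s+h_s)/2)$ and accounts for the line $s_k=x_{s,k}-q_{s,k}/2$, and the projection onto the graph $\menge{(b,Ab)}{b\in\RR^p}$, which sends $(x_b,h_b)$ to $(b,Ab)$ with $b=(\Id+A^\top A)^{-1}(x_b+A^\top h_b)=x_b-Q(Ax_b-h_b)$ by the matrix inversion lemma and the definition of $Q$ in \eqref{e:nancago12}, accounting for the line $b_k=x_{b,k}-Qq_{b,k}$. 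Writing the relaxed Douglas--Rachford step with stepsize $\gamma$ and relaxation $\mu_k\in[\varepsilon,2-\varepsilon]$ as $p_1=\proj_V\mathbf{x}_k$, $p_2=\prox_{\gamma\mathsf{f}_1}(2p_1-\mathbf{x}_k)$, $\mathbf{x}_{k+1}=\mathbf{x}_k+\mu_k(p_2-p_1)$, one checks line by line that, with $\mathbf{x}_k=(x_{s,k},x_{b,k},h_{s,k},h_{b,k})$, the vectors $(s_k,b_k,s_k,z_{b,k})$ (where $z_{b,k}=Ab_k$) are the components of $p_1$ and $(c_{s,k},c_{b,k},d_{s,k},d_{b,k})$ are those of $p_2$; thus \eqref{e:9hyR09} \emph{is} this recursion.

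Finally I would invoke the convergence theorem for the relaxed Douglas--Rachford algorithm \cite{Livre1}, which requires $\mathsf{f}_1,\mathsf{f}_2\in\Gamma_0(\mathcal H)$ (true by construction), a qualification condition, and $\operatorname{Argmin}(\mathsf{f}_1+\mathsf{f}_2)\neq\emp$. Since $\dom\mathsf{f}_2=V$ is a subspace, $\reli\dom\mathsf{f}_2=V$ and $\reli\dom\mathsf{f}_1=D\times E\times\reli\dom\mathsf g$, so a point of $(D\times A(E))\cap\reli\dom\mathsf g$ supplied by \eqref{e:cq} yields a point of $\reli\dom\mathsf{f}_1\cap\reli\dom\mathsf{f}_2$; this also validates the subdifferential sum rule, so that $\zer(\partial\mathsf{f}_1+\partial\mathsf{f}_2)=\operatorname{Argmin}(\mathsf{f}_1+\mathsf{f}_2)\neq\emp$ by the existence step. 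The theorem then yields convergence of $(\mathbf{x}_k)_{k\in\NN}$ to a fixed point and of the shadow sequence $(s_k,b_k,z_{b,k})_{k\in\NN}$ to a minimizer of $\mathsf{f}_1+\mathsf{f}_2$; reading off the first two components gives $s_k\to s$ and $b_k\to b$ with $(s,b)$ solving Problem~\ref{prob:1}. I expect the identification step to be the crux: one must choose the product space and splitting so that \emph{simultaneously} the separable form of $\prox_{\gamma\mathsf{f}_1}$ and the closed form of $\proj_V$ (through $Q$ and $W$) reproduce \eqref{e:9hyR09} verbatim, including the reflected arguments $2\sigma_{i,k}-\eta_{i,k}$ and $2z_{i,k}-h_{i,k}-y_i$ and the $y_i$-shifts inherited from \eqref{e:g}, whereas the convex-analytic parts — existence via \eqref{e:coer}--\eqref{e:cq}, the qualification condition, and the appeal to the abstract convergence result — are routine once the correspondence is in place.
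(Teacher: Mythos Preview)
Your proposal is correct and follows essentially the same route as the paper: the paper likewise lifts Problem~\ref{prob:1} to the product space $\RR^{N}\times\RR^{p}\times\RR^{N}\times\RR^{n+m}$, splits the objective as $\boldsymbol{F}(\mathsf{w},\mathsf{z})=\iota_{D\times E}(\mathsf{w})+\mathsf{g}(\mathsf{z})$ and $\boldsymbol{G}=\iota_{\boldsymbol V}$ with $\boldsymbol V=\gra\mathsf{L}$ for $\mathsf{L}(s,b)=(s,Ab)$, verifies existence via the coercivity~\eqref{e:coer} and the qualification condition~\eqref{e:cq}, and then unwinds the Douglas--Rachford recursion $\boldsymbol u_k=\proj_{\boldsymbol V}\boldsymbol v_k$, $\boldsymbol w_k=\prox_{\gamma\boldsymbol F}(2\boldsymbol u_k-\boldsymbol v_k)$, $\boldsymbol v_{k+1}=\boldsymbol v_k+\mu_k(\boldsymbol w_k-\boldsymbol u_k)$ block by block to recover \eqref{e:9hyR09}. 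Your decomposition $\mathsf{f}_1=\iota_D\oplus\iota_E\oplus\mathsf{g}$, $\mathsf{f}_2=\iota_V$ is exactly the paper's $\boldsymbol{F},\boldsymbol{G}$ in different notation, and your computation of $\proj_V$ via the diagonal projection and the graph projection (yielding the $q_{s,k}/2$ and $Q$ formulas) matches the paper's derivation through $\mathsf{R}=\mathsf{L}^\top(\Id+\mathsf{LL}^\top)^{-1}$.
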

\begin{proof}
See Appendix~A.
\end{proof}

In most practical situations, \eqref{e:coer} and \eqref{e:cq} are 
typically satisfied. For example the following describes a scenario
that will be encountered in Section~\ref{sec:4}.

\begin{proposition}
\label{p:2}
Consider the setting of Problem~\ref{prob:1} and 
suppose that the following additional properties hold:
\begin{enumerate}
\item
\label{p:2i}
For every $i\in\{1,\ldots,N\}$, 
$\varphi_i=\theta_i+\alpha_i$, where 
$\theta_i\colon\RR^{n_i}\to\RP$ is convex and $\alpha_i\in\RPP$. 
\item
\label{p:2ii}
For every $i\in\{1,\ldots,M\}$, 
$\psi_i\colon\RR^{m_i}\to\RP$.
\item
\label{p:2iii}
For some $j\in\{1,\ldots,M\}$, $\psi_j(L_jb)\to\pinf$ as 
$\|b\|_2\to\pinf$ while $C^\top b=0$.
\item
\label{p:2iv}
$D\cap\RPP^N\neq\emp$.
\end{enumerate}
\end{proposition}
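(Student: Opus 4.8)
The plan is to show that hypotheses~\ref{p:2i}--\ref{p:2iv} force the coercivity condition~\eqref{e:coer} and the domain qualification~\eqref{e:cq}; existence of a solution to Problem~\ref{prob:1} and convergence of the sequences produced by Algorithm~\ref{algo:1} then follow immediately from Proposition~\ref{p:1}. The first preparatory observation is that, by~\ref{p:2i}, writing $\varphi_i=\theta_i+\alpha_i$ and substituting into the defining formula~\eqref{e:perspective}, the additive constant contributes $\alpha_i\sigma_i$ when $\sigma_i>0$ and drops out when $\sigma_i=0$ (where $\widetilde{\varphi}_i(0,\cdot)$ is the recession function of $\theta_i$, which is nonnegative because $\theta_i\geq 0$ is bounded below); hence $\widetilde{\varphi}_i(\sigma_i,u_i)\geq\alpha_i\sigma_i$ whenever $\sigma_i\geq 0$, and $\widetilde{\varphi}_i(\sigma_i,u_i)=\pinf$ when $\sigma_i<0$. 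Two further easy facts: by~\ref{p:2i}--\ref{p:2ii} each $\theta_i$ and each $\psi_i$ is real-valued (it takes values in $\RP$), so $\dom\varphi_i=\RR^{n_i}$ and $\dom\psi_i=\RR^{m_i}$, and each $\psi_i$ is nonnegative.

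For~\eqref{e:coer}, set $c=\min_{1\leq i\leq N}\alpha_i>0$ and let $j$ be the index from~\ref{p:2iii}. For $s\in D$ with every $\sigma_i\geq 0$ (otherwise $\mathsf{g}(s,Ab)=\pinf$) and $b\in E$, the bound above together with $\psi_i\geq 0$ gives $\mathsf{g}(s,Ab)\geq c\|s\|_1+\psi_j(L_jb)\geq c\|s\|_2+\psi_j(L_jb)$. I would then argue by contradiction: if $(s_k,b_k)$ lies in $D\times E$ with $\|s_k\|_2+\|b_k\|_2\to\pinf$ but $\mathsf{g}(s_k,Ab_k)$ stays bounded along a subsequence, the displayed inequality keeps both $\|s_k\|_2$ and $\psi_j(L_jb_k)$ bounded along it, so necessarily $\|b_k\|_2\to\pinf$ with $C^\top b_k=0$, contradicting the coercivity in~\ref{p:2iii} of $b\mapsto\psi_j(L_jb)$ over $E$. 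This step is routine once the linear lower bound is in place.

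For~\eqref{e:cq}, use~\ref{p:2iv} to pick $s\in D\cap\RPP^N$ and take $b=0\in E$, so the candidate point $(s,Ab)=(s,0,\ldots,0)$ lies in $D\times A(E)$. Since $\mathsf{g}$ is a separable sum over the blocks $(\sigma_i,u_i)_{1\leq i\leq N}$ and $(v_i)_{1\leq i\leq M}$, its effective domain is the corresponding product and $\reli$ distributes over it; because $\dom\varphi_i=\RR^{n_i}$, the set $\dom\widetilde{\varphi}_i$ contains the open set $\RPP\times\RR^{n_i}$, hence has full affine hull and $\reli\dom\widetilde{\varphi}_i\supseteq\RPP\times\RR^{n_i}$, while $\reli\dom\psi_i=\RR^{m_i}$. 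The loss argument at our candidate point is $(\sigma_i,-y_i)$ with $\sigma_i>0$, which lies in $\RPP\times\RR^{n_i}$ after accounting for the translation $u_i\mapsto u_i-y_i$; hence $(s,0,\ldots,0)\in\reli\dom\mathsf{g}$, which is~\eqref{e:cq}. Proposition~\ref{p:1} then applies and yields the conclusion.

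The delicate parts, which I would write out carefully, are the handling of the $\sigma_i=0$ face of each perspective (its identification with a recession function and its nonnegativity) in the first step, and, in the third step, the bookkeeping showing that relative interiors survive the finite product structure of $\mathsf{g}$ and the coordinate shifts $u_i\mapsto u_i-y_i$, so that the convenient point with $s\in\RPP^N$ and $b=0$ genuinely belongs to $\reli\dom\mathsf{g}$. The coercivity argument itself, and the final appeal to Proposition~\ref{p:1}, are then immediate.
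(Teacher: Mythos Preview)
Your proposal is correct and follows essentially the same approach as the paper's proof in Appendix~B: both verify \eqref{e:coer} by exploiting the linear lower bound $\widetilde{\varphi}_i(\sigma_i,\cdot)\geq\alpha_i\sigma_i$ from \ref{p:2i}, the nonnegativity from \ref{p:2ii}, and the coercivity in $b$ from \ref{p:2iii}, and both verify \eqref{e:cq} by observing that $\reli\dom\mathsf{g}\supseteq\RPP^N\times\RR^{n+m}$ so that the condition collapses to \ref{p:2iv}. Your coercivity argument is in fact slightly tighter than the paper's---you package the two limits into a single quantitative lower bound $\mathsf{g}(s,Ab)\geq c\|s\|_2+\psi_j(L_jb)$ and run a contradiction, whereas the paper treats $\|s\|_2\to\pinf$ and $\|b\|_2\to\pinf$ as separate cases without explicitly addressing why the joint limit follows; but the underlying ideas are identical.
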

Then \eqref{e:coer} and \eqref{e:cq} are satisfied.
\begin{proof}
See Appendix~B.
\end{proof}

\subsection{Model selection}
\label{subsec:modsel}
In the context of log-contrast regression, a number of different
model selection strategies have been proposed, including stability
selection \cite{Lin2014,Meinshausen2010} and Generalized
Information Criteria \cite{Sun2018}. In \cite{Shi2016}, a
scale-dependent tuning parameter has been derived where the optimal
scale has been found via line search. Our joint scale and
regression modeling approach makes this line search obsolete, thus
yielding a parameter-free model selection scheme. More
specifically, we consider two model selection schemes. Firstly,
following \cite{Shi2016}, we consider
\begin{equation}
\label{e:lam0}
\lambda_0=\sqrt{2}q_n(r/p), 
\end{equation}
where $q_n(t)=n^{-1/2}\Phi^{-1}(1-t)$,
$\Phi^{-1}$ is the quantile function for the standard
normal distribution, and $r$ is the solution to the equation 
$r=q^4_1(r/p)+2q^2_1(r/p)$. In practice, this data-independent
model selection scheme may lead to inclusion of spurious 
coefficients. 
To assess the robustness of the inferred solutions 
we combine this theoretically derived regularization with 
stability selection \cite{Meinshausen2010}. The
original stability selection approach selects, for every subsample,
a small set of predictors from the regularization path,
e.g., the first $q$ predictors that appear along the path or the $q$
coefficients that are largest in absolute value across the entire
path.  We here propose to select, for every subsample, the nonzero
coefficients present at regularization parameter $\lambda_0$. Note
that $\lambda_0$ is sample-size dependent and hence needs to be
adapted to the specific subsample size used in stability selection.
As default values, we consider a subsample size of $\lceil n/2
\rceil$ and generate 100 subsamples.  The key diagnostic in
stability selection is the selection frequency profile for each
coefficient. To select a stable set of coefficients, a threshold
parameter $t_s \in [0.6, 0.9]$ is recommended
\cite{Meinshausen2010}, where all the coefficients with selection
frequency above $t_s$ are included in the final model. 

%We propose to use 
%\begin{equation}
%\label{e:ts}
%t_s=\frac{1}{2} + \frac{\bar{q}}{2p} \,, 
%\end{equation}
%where $\bar{q}$ is the average number of variables selected across
%all subsamples.  

\section{Applications to compositional microbiome data}
\label{sec:4}
We apply several instances of the general log-contrast model
outlined in Problem~\ref{prob:1} in the context of microbiome data
analysis tasks. We set
$M=1$, $m_1=m$, $L_1=\Id$, and employ as a
regularization function the $\ell^1$ norm $\psi_1=\lambda
\|\cdot\|_1$. We use the functions in
Examples~\ref{ex:1} and \ref{ex:2} as instances of
the perspective loss functions $\widetilde{\varphi}_1$.
We refer to these instances as Least Squares (LS) and Huber 
log-contrast model, respectively. Thus, in case of the LS model, 
\eqref{e:prob0} becomes
\begin{equation}
\label{e:prob1}
\minimize{\sigma\in\RR,\:b\in E}
{\widetilde{\|\cdot\|_2}
\big(\sigma,Xb-y\big)+\lambda\|b\|_1},
\end{equation}
while in the case of the Huber model it becomes
\begin{multline}
\label{e:prob2}
\minimize{s\in D,\:b\in E}{\Sum_{i=1}^{n}\widetilde{h_\rho}
\big(\sigma_i,x_ib-\eta_i\big)+\lambda\|b\|_1},\\
\text{where}\quad
D=\menge{(\sigma,\ldots,\sigma)\in\RR^n}{\sigma\in\RR}.
\end{multline}
Note that the projection of a vector $s\in\RR^n$ onto $D$, as
required in Algorithm~\ref{algo:1}, is given by
\begin{equation}
\proj_Ds=\Bigg(\frac{1}{n}\sum_{i=1}^n\sigma_i,\ldots,
\frac{1}{n}\sum_{i=1}^n\sigma_i\Bigg).
\end{equation}
Dependent on the application, we use different
zero-sum constraints on $b$ as specified by the matrix $C$.  To
solve the various instances of Problem~\ref{prob:1}, we use 
Algorithm~\ref{algo:1} and set the parameter $\mu_k=1.9$ and
$\gamma=1$. We consider that the algorithm has converged when 
$\|b_k-b_{k+1}\|_2<10^{-6}$.

\subsection{Body mass index prediction from gut microbiome data}
We first consider a cross-sectional study that examines the
relationship between diet and gut microbiome composition, where
additional demographic covariates, including body mass index (BMI)
are available, referred to as COMBO data set \cite{Wu2011c}. After
pre-processing and filtering, the data set comprises the
log-transformed relative abundances of $p=87$ taxa at the genus
level across $n=96$ healthy subjects. Following previous analyses
\cite{Lin2014,Shi2016}, we
investigate the relationship between BMI and the microbial
compositions in a log-contrast regression framework. 
We use $C=\mathbf{1}_p$ to model the standard zero-sum constraint.
In addition to the compositional covariates, two covariate
measurements, fat and calorie intake, are also taken into account
via joint unpenalized least squares.  We investigate the influence
of different loss functions, LS and Huber, as well as the
sub-compositional constraints on the quality of the estimation, the
size of the support set, and the predictive power. 

\begin{figure}[h]
\centering
\includegraphics[width=12cm]{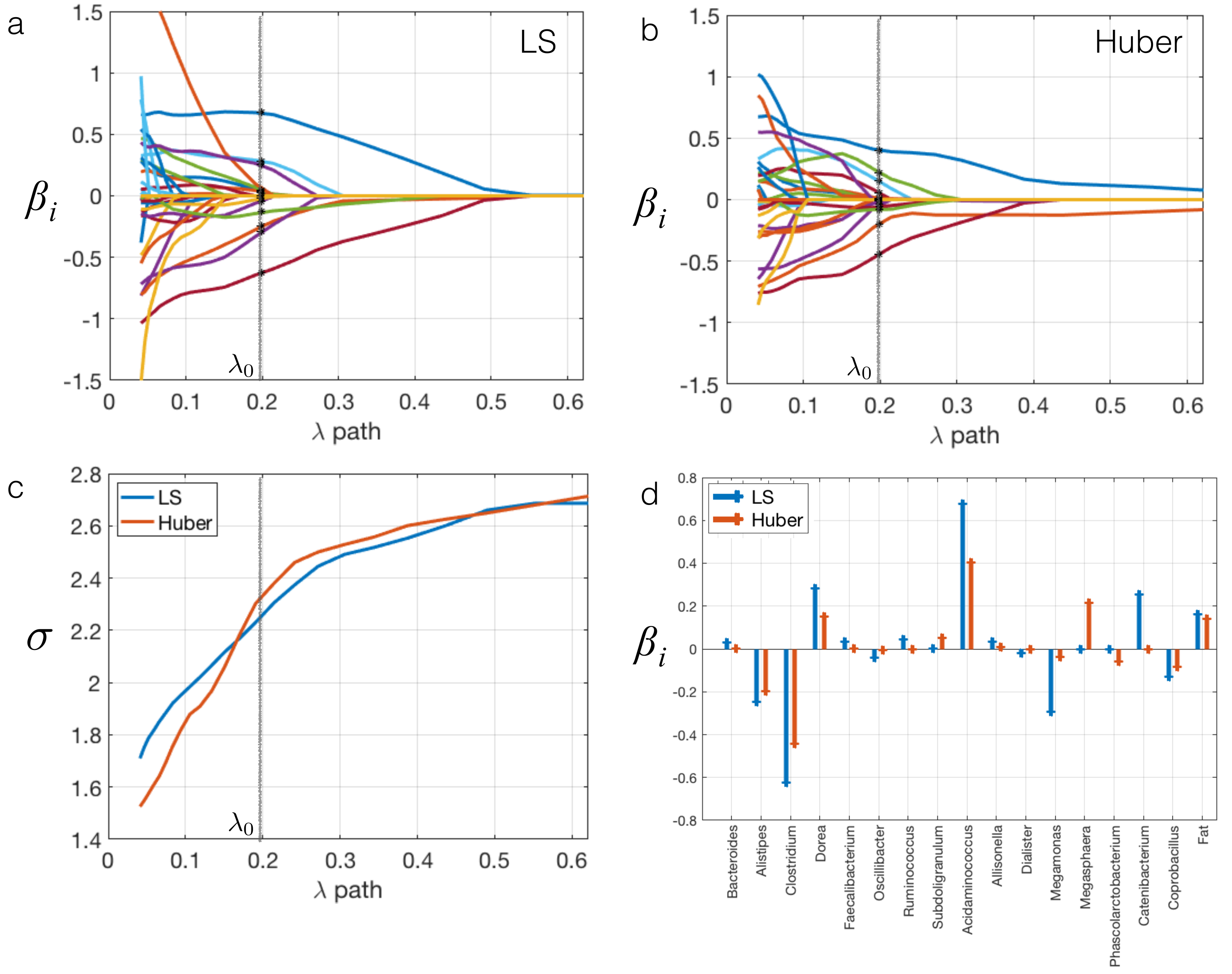}
\caption{a. Solution path of regression vector $b$ in the sparse 
Least Squares (LS) log-contrast model on the full COMBO data. 
The grey line marks the theoretical $\lambda_0$ from
\eqref{e:lam0}. b. Solution path of regression vector $b$ in
sparse Huber log-contrast model on the full COMBO data. c. Solution
path of the scale estimates $\sigma$ for both log-contrast models
on the full COMBO data. d. Comparison of the regression estimates
of both models at regularization parameter $\lambda_0$ on the full
data set. Both models agree on the two strongest predictors, 
the genera Clostridium and Acidaminococcus. 
}
\label{fig:Combo1}
\end{figure}

To highlight the ability of the algorithm to jointly estimate
regression and scale we solve the two problems over the
regularization path with $40$ $\lambda$ values on a log-linear grid
in $[0.0069,\ldots,0.6989]$. We also consider the theoretically
derived regularization parameter $\lambda_0=0.1997$ from
\eqref{e:lam0}. Figure~\ref{fig:Combo1}a and b show the solution
path of the regression vector $b$ for the sparse LS log-contrast
model and the Huber model, respectively. Figure~\ref{fig:Combo1}c
displays the corresponding joint scale estimates $\sigma$ for the LS
and the Huber model. The estimated regression coefficients at
$\lambda_0$ are highlighted in Figure~\ref{fig:Combo1}d. Both
models agree on a set of six genera, including Clostridium as
strongest negative and Acidaminococcus as the strongest positive
predictors. This implies that the log-ratio of Acidaminococcus to
Clostridium is positively associated with BMI. Other genera include
Alistipes, Megamonas, and Coprobacillus with negative coefficients,
and Dorea with positive coefficient. In \cite{Lin2014,Shi2016}, the
genera Alistipes, Clostridium, Acidaminococcus, and Allisonella
have been identified as key predictors. The solutions of the
perspective log-contrast models corroborates these finding for
Clostridium and Acidaminococcus, and to a less extent to Alistipes, 
whereas the genus Allisonella has only a small strictly
positive coefficient in both log-contrast models 
(Figure~\ref{fig:Combo1}d). 

Next, we consider the stability selection scheme introduced in
Section~\ref{subsec:modsel} with default parameters and threshold
$t_s=0.7$. 
Figure~\ref{fig:Combo2}a shows the stability-based frequency profile 
for the sparse LS and Huber log-contrast model. For both models,
only Clostridium and Acidaminococcus are selected. Stability
selection thus leads to a simple explanatory log-ratio model formed
by the ratio of the relative abundances of Acidaminococcus to
Clostridium. However, when considering the final model prediction
results, as shown in Figure~\ref{fig:Combo2}b for the Huber model,
this model can only explain normal to overweight participants (BMI
20-30) as $34$ out of 88 participants are considered outliers in
the Huber model. The overall refitted $R^2$ is $0.19$ under the
Huber model but increases to $0.43$ for the 60 inlier participants.

\begin{figure}[h]
\centering
\includegraphics[width=12cm]{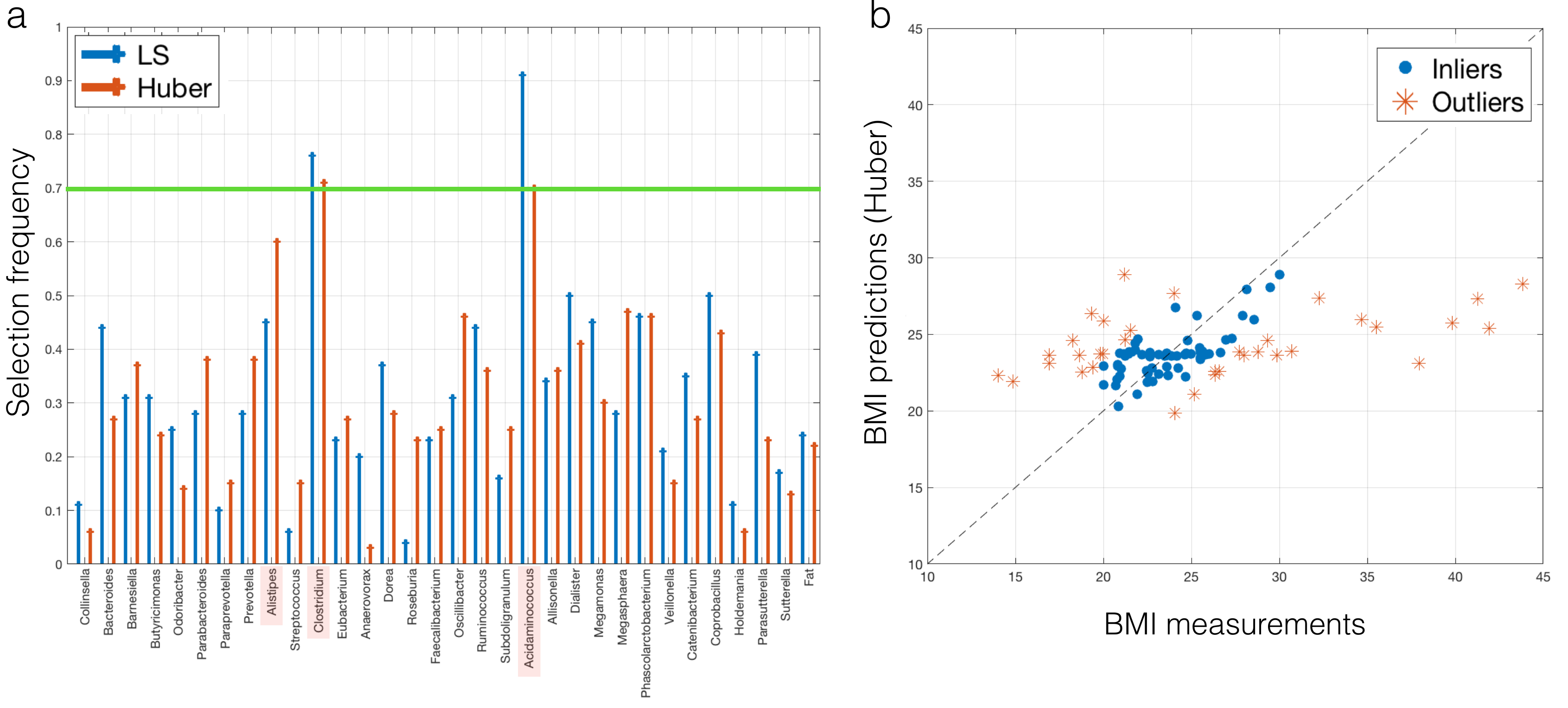}
\caption{a. Stability selection profile for all taxa selected with a 
frequency $>0.1$ in either the sparse Least Squares (LS, blue)
or the Huber (red) log-contrast model. The green dashed line
marks the stability threshold $t_\text{s}=0.7$, selecting the
genera Clostridium 
and Acidaminococcus. b. Prediction of BMI from the 
log-contrast of the two genera in the Huber log-contrast 
model for inliers (blue) and outliers
(red) (overall $R^2=0.19$). 
}
\label{fig:Combo2}
\end{figure}

Next, we investigate the influence of sub-compositional constraints
on the stability selection frequency for the two estimation
procedures.  We follow the analysis of \cite{Shi2016} and consider
a subset of 45 genera that have the highest relative abundances in
the data set. These 45 genera belong to $K=4$ distinct phyla:
Actinobacteria (two genera), Bacteroides (eight genera), Firmicutes
(32 genera), and Proteobacteria (three genera). The constraint
matrix $C$ is hence an orthogonal $(45 \times 4)$-dimensional
binary matrix. Figure~\ref{fig:Combo3}a and b show stability
selection profile for both the LS and  the Huber model with and
without compositional constraints, respectively.
Figure~\ref{fig:Combo3}c shows the difference in the selection
frequency profiles. Although several genera, including
Collinsella, Paraprevotella, Parabacteroides, Faecalibacterium,
Oscillibacter, and Parasutterela display significant frequency
differences, the two genera Clostridium and Acidaminococcus, both
belonging to the Firmicutes phylum, demonstrate again the highest
stability both with and without sub-compositional constraints.

\begin{figure}[h]
\centering
\includegraphics[width=12cm]{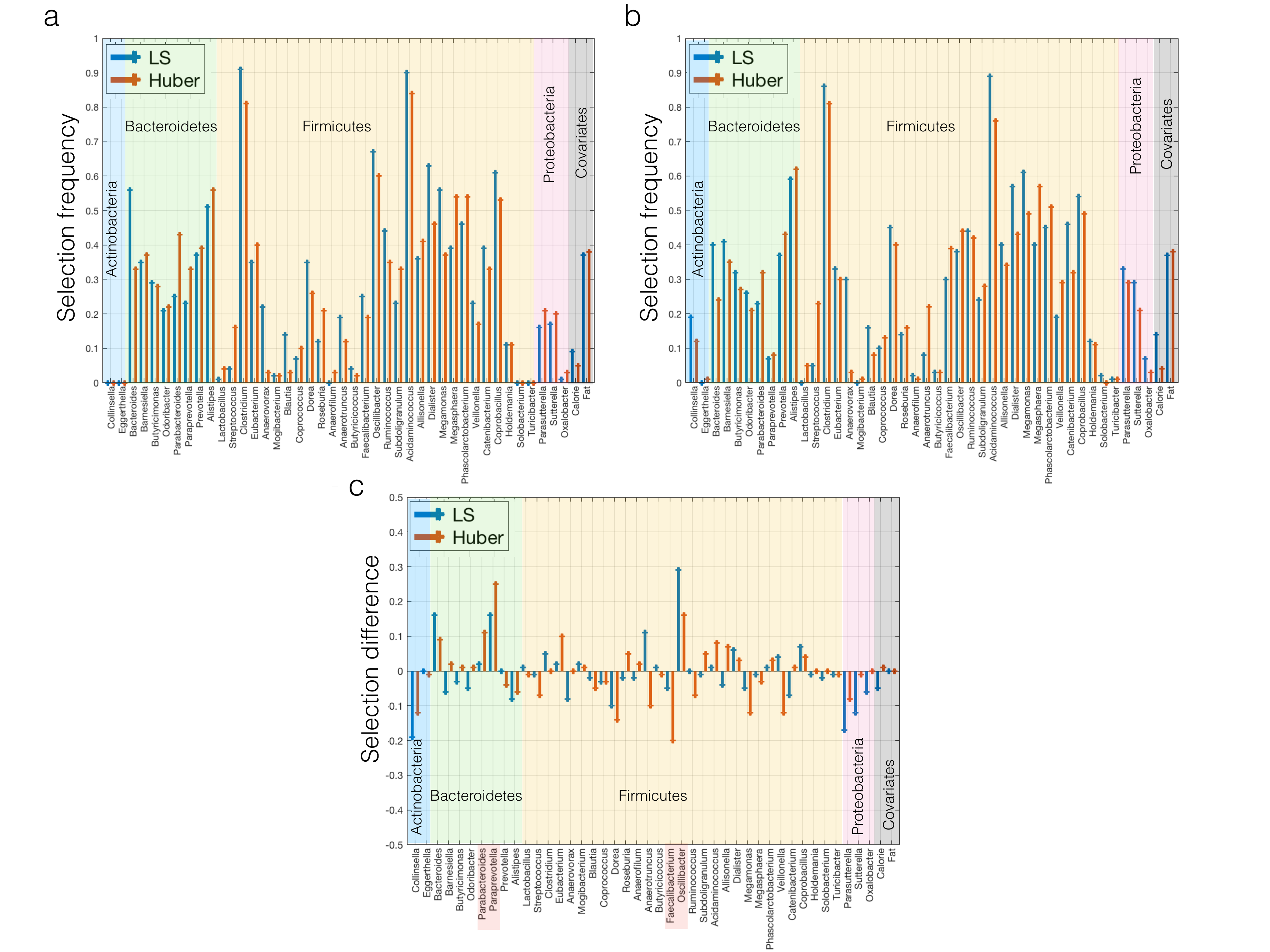}
\caption{a. Stability selection profiles for the subset of 45 taxa 
selected with a frequency $>0.1$ in either the sparse Least Squares
(LS, blue) or the Huber (red) log-contrast model with
sub-compositional constraints. b.  Same as a. but without
sub-compositional constraints. c. Stability selection frequency
differences between the two approaches. Several genera show
significant differences. The colors signify the different phyla
that the genera belong to and the noncompositional covariates fat
and diet intake. 
}
\label{fig:Combo3}
\end{figure}

\subsection{Relationship between soil microbiome and pH
concentration}
We next consider a dataset put forward in \cite{Lauber2009}
comprising $n=88$ soil samples from North and South America. Both
amplicon sequencing data and environmental covariates, including pH
concentrations, are available and have been re-analyzed via a
log-ratio approach in \cite{Morton2017a}. The amplicon data
contains $p=116$ OTUs, and we consider $C=\mathbf{1}_p$. 
We perform stability selection with 
default parameters as outlined in Section~\ref{subsec:modsel}. The
selection frequency of the different regression coefficients is
shown Figure~\ref{fig:pH}a. At stability threshold $t_\text{s}=0.7$,
seven taxa are selected in both models, four Acidobacteria, 
two Proteobacteria, and one Actinobacteria. After re-estimation of 
the two perspective log-contrast models on the selected subset,
the two taxa of order Ellin6513 and one taxon of family 
Koribacteraceae have negative coefficients whereas 
two taxa belonging to the
genus Balneimonas as well as one Rubrobacter taxon and one 
taxon of order RB41 have positive associations 
(Figure~\ref{fig:pH}b). This allows the model to be compactly
represented with only four log-ratios of compositions. The
coefficients show small variations with respect to the choice
of the data fitting term. The Huber model ($R^2=0.88$) deems 29
data points to be outliers in the final estimate
(Figure~\ref{fig:pH}c). For completeness, we include the mean
absolute deviation (MAD) between the model estimates and the data in
Figure~\ref{fig:pH}d. The selected taxa cover a wide range 
of average pH levels (as provided in \cite{Morton2017a}), ranging 
from $4.9$ to $6.75$, implying that the learned model may 
indeed generalize to other soil types not present 
in the current data set.

\begin{figure}[h]
\centering
\includegraphics[width=12cm]{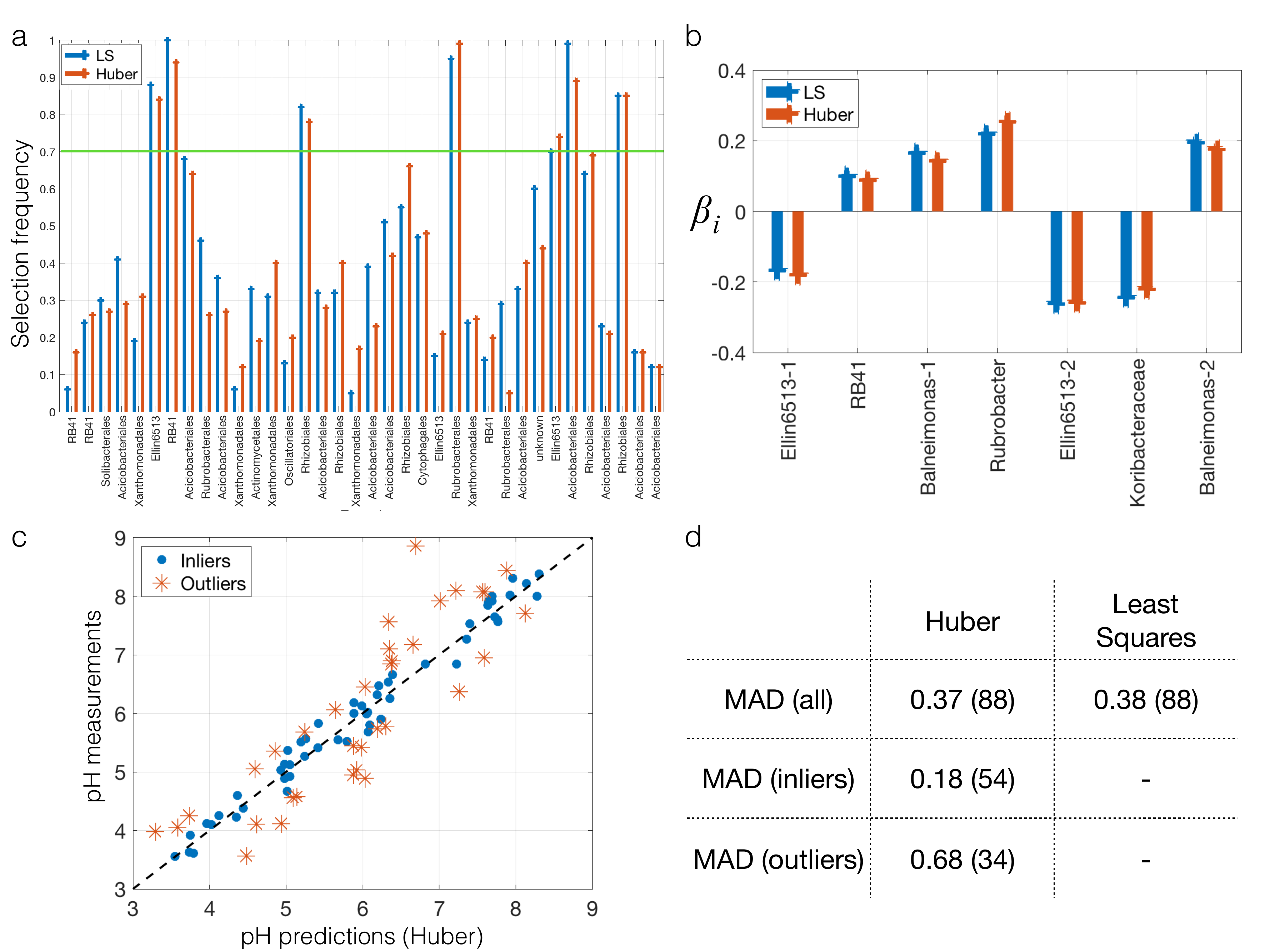}
\caption{a. Stability selection profile for all taxa selected with a 
frequency $>0.1$ in either the sparse Least Squares (LS, blue)
log-contrast model or the Huber model (red). The green dashed line
marks the stability selection threshold $t_\text{s}=0.7$. b.
Values of the seven selected log-contrast regression coefficients for
LS (blue) and the Huber (red) model. c. Prediction of pH
measurements from the Huber model for inliers (blue) and outliers
(red) ($R^2=0.88$). d. Table summarizing the mean absolute
deviation (MAD) of the two model estimates on the data. Numbers in
parentheses represent the number of inlier and outlier data points
under the Huber outlier model. 
}
\label{fig:pH}
\end{figure}

\section{Discussion and conclusion}
Finding linear relationships between continuous variables and
compositional predictors is a common task in many areas of science.
We have proposed a general estimation model
for high-dimensional log-contrast regression which includes many
previous proposals as special cases 
\cite{Lin2014,Mishra2019,Shi2016,Wang2017}.
Our model belongs to the class of perspective M-estimation models
which allows for scale estimation in the data fitting term while
preserving the overall convexity of the underlying model. This is
made possible due to recent advances in the theory of perspective 
functions \cite{Svva18,Combettes2018a,Combettes2018b}. We 
have introduced a
Douglas-Rachford algorithm that can solve the corresponding
constrained nonsmooth convex optimization problem with rigorous
guarantees
on the convergence of the iterates. We have illustrated the
viability of our approach on two microbiome data analysis tasks: 
body mass index (BMI) prediction from gut
microbiome data in the COMBO study and pH prediction from soil
microbial abundance data. Our joint regression and scale estimation
enabled the use of a universal single tuning parameter $\lambda_0$ 
\cite{Shi2016} to control the sparsity of the estimates. We have 
combined this approach with stability-based model selection 
\cite{Meinshausen2010} to determine sparse stable sets of 
log-contrast predictors. For the gut microbiome BMI analysis, the 
robust Huber log-contrast model identified two genera that can 
predict BMI well for normal to overweight participants 
while simultaneously identifying outliers with respect
to the log-contrast model. In the soil microbiome data set, we 
derived a parsimonious pH prediction model which requires 
only four log-ratios of microbial compositions with an overall
$R^2=0.88$. 

Going forward, we believe that the general log-contrast model 
and the associated optimization and model selection techniques 
presented here will provide a valuable off-the-shelf tool for 
log-contrast regression analysis when compositional data such 
as microbial relative abundance data are used as predictors 
in exploratory data analysis.

%\begin{acknowledgements}
%If you'd like to thank anyone, place your comments here
%and remove the percent signs.
%\end{acknowledgements}

\section*{Appendix~A -- Proof of Proposition~\ref{p:1}}
Define $\mathsf{g}$ as in \eqref{e:g} and set
\begin{equation}
\label{e:20}
\begin{cases}
\mathsf{f}=\iota_{E\times D}\\
\mathsf{L}\colon\RR^N\times\RR^p\to\RR^N\times\RR^{n_1}
\times\cdots\times\RR^{n_N}\times\RR^{m_1}\times
\cdots\times\RR^{m_M}\\
\hskip 8mm (s,b)\;\;\;\mapsto (s,Ab)=
\big(s,X_1b,\ldots,X_Nb,L_1b,\ldots,L_Mb\big).
\end{cases}
\end{equation}
Then $\mathsf{f}\in\Gamma_0(\RR^{N+p})$ as the indicator of the
vector subspace $D\times E$, and
\begin{equation}
\label{e:prox1}
(\forall (s,b)\in\RR^{N+p})\quad\prox_{\gamma\mathsf{f}}(s,b)
=\big(\proj_Ds,\proj_Eb\big)
=\big(\proj_Ds,Wb\big),
\end{equation}
where the last identity follows from
\cite[Proposition~29.17(iii))]{Livre1}.
On the other hand, it follows from 
\cite[Proposition~2.3(ii)]{Svva18} and 
\cite[Proposition~8.6]{Livre1} that 
$\mathsf{g}\in\Gamma_0(\RR^{N+n+m})$. 
Furthermore, we derive 
from \cite[Propositions~24.11 and 24.8(ii)]{Livre1} that 
\begin{multline}
\label{e:prox2}
(\forall\big(s,u_1,\ldots,u_N,v_1,\ldots,v_M\big)\in
\RR^N\times\RR^{n_1}\times\cdots\times\RR^{n_N})\\
\prox_{\gamma\mathsf{g}}
(s,u_1,\ldots,u_N,v_1,\ldots,v_M)=\big((0,y_1)+
\prox_{\gamma\widetilde{\varphi}_1}(\sigma_1,u_1-y_1),\ldots\\
\ldots,(0,y_N)+
\prox_{\gamma\widetilde{\varphi}_N}(\sigma_N,u_N-y_N),
\prox_{\gamma\psi_1}v_1,\ldots,\prox_{\gamma\psi_M}v_M\big).
\end{multline}
In addition, \eqref{e:cq} implies that 
\begin{align}
\label{e:cq5}
\mathsf{L}(\dom\mathsf{f})\cap\dom\mathsf{g}
&=\big(\mathsf{L}(D\times E)\big)\cap
\dom\mathsf{g}\nonumber\\
&=\big(D\times A(E)\big)\cap\dom\mathsf{g}\nonumber\\
&\neq\emp.
\end{align}
Consequently, 
$\dom(\mathsf{f}+\mathsf{g}\circ\mathsf{L})\neq\emp$.
Thus, 
\begin{equation}
\mathsf{f}+\mathsf{g}\circ\mathsf{L}\in\Gamma_0(\RR^{N+p})
\end{equation}
while, using the variable $\mathsf{w}=(s,b)\in\RR^{N+p}$, 
\eqref{e:coer} and \eqref{e:20} imply that 
\begin{equation}
\label{e:coer3}
\lim_{\mathsf{w}\in\RR^{N+p}, \|\mathsf{w}\|_2\to\pinf}
\mathsf{f}(\mathsf{w})+\mathsf{g}(\mathsf{Lw})=\pinf.
\end{equation}
It therefore follows from 
\cite[Proposition~11.15(i)]{Livre1} that 
\begin{equation}
\label{e:a}
\text{Argmin}(\mathsf{f}+\mathsf{g}\circ\mathsf{L})\neq\emp.
\end{equation}
Since \eqref{e:prob0} is equivalent to 
\begin{equation}
\label{e:prob1}
\minimize{\mathsf{\mathsf{w}}\in\RR^{N+p}}
{\mathsf{f}(\mathsf{w})+\mathsf{g}(\mathsf{L}\mathsf{w})},
\end{equation}
we infer from \eqref{e:a} that Problem~\ref{prob:1} admits at 
least one solution. Note that \eqref{e:prob1} can be rewritten as
\begin{equation}
\label{e:prob2}
\minimize{\substack{
\mathsf{\mathsf{w}}\in\RR^{N+p}\\
\mathsf{\mathsf{z}}\in\RR^{N+n+m}\\
\mathsf{L}\mathsf{w}=\mathsf{z}}}
{\mathsf{f}(\mathsf{w})+\mathsf{g}(\mathsf{z})}.
\end{equation}
Now set
$\boldsymbol{u}=(\mathsf{w},\mathsf{z})\in\HHH=\RR^{2N+m+n+p}$ 
and 
\begin{equation}
\label{e:p3}
\begin{cases}
\boldsymbol{F}\colon\HHH\to\RX\colon(\mathsf{w},\mathsf{z})\mapsto
\mathsf{f}(\mathsf{w})+\mathsf{g}(\mathsf{z})\\
\boldsymbol{G}=\iota_{\boldsymbol{V}},\quad\text{where}\quad 
\boldsymbol{V}=\menge{(\mathsf{x},\mathsf{h})\in\HHH}
{\mathsf{L}\mathsf{x}=\mathsf{h}}.
\end{cases}
\end{equation}
Then $\boldsymbol{F}\in\Gamma_0(\HHH)$, 
$\boldsymbol{G}\in\Gamma_0(\HHH)$, and
\eqref{e:prob2} is equivalent to 
\begin{equation}
\label{e:prob3}
\minimize{\boldsymbol{u}\in\HHH}{\boldsymbol{F}(\boldsymbol{u})
+\boldsymbol{G}(\boldsymbol{u})}.
\end{equation}
Moreover, we deduce from \eqref{e:cq} that
\begin{equation}
\label{e:cq2}
\mathsf{L}(\dom\mathsf{f})\cap\reli\dom\mathsf{g}
=\big(D\times A(E)\big)\cap\reli\dom\mathsf{g}
\neq\emp.
\end{equation}
Consequently, using standard relative interior calculus
\cite[Section~6]{Rock70}, \eqref{e:p3} yields
\begin{align}
\label{e:cq3}
\reli(\dom\boldsymbol{G})\cap\reli(\dom\boldsymbol{F})
&=\boldsymbol{V}\cap\reli\big(\dom \mathsf{f}\times
\dom\mathsf{g}\big)\nonumber\\
&=\boldsymbol{V}\cap\big(\reli\dom\mathsf{f}\times
\reli\dom\mathsf{g})\nonumber\\
&=\boldsymbol{V}\cap(\dom\mathsf{f}\times
\reli\dom\mathsf{g})\nonumber\\
&=\menge{(\mathsf{x},\mathsf{Lx})}
{\mathsf{x}\in\RR^{N+p}}\cap(\dom\mathsf{f}\times
\reli\dom\mathsf{g})\nonumber\\
&\neq\emp.
\end{align}
Therefore, given $\gamma\in\RPP$, $\varepsilon\in\zeroun$, 
$\boldsymbol{v}_0\in\HHH$, and a sequence $(\mu_k)_{k\in\NN}$
in $[\varepsilon,2-\varepsilon]$, the nonsmooth convex 
minimization problem \eqref{e:prob3} can be solved using the 
Douglas-Rachford algorithm 
\begin{equation}
\label{e:kj18}
\begin{array}{l}
\text{for}\;k=0,1,\ldots\\
\left\lfloor
\begin{array}{l}
\boldsymbol{u}_k=\prox_{\gamma{\boldsymbol{G}}}\boldsymbol{v}_k\\
\boldsymbol{w}_{k}=\prox_{\gamma{\boldsymbol{F}}}
(2\boldsymbol{u}_{k}-\boldsymbol{v}_k)\\
\boldsymbol{v}_{k+1}=\boldsymbol{v}_k+
\mu_k(\boldsymbol{w}_{k}-\boldsymbol{u}_{k}),
\end{array}
\right.\\[2mm]
\end{array}
\end{equation}
which produces a sequence $(\boldsymbol{u}_k)_{k\in\NN}$ that
converges to a solution to \eqref{e:prob3} 
\cite[Section~28.3]{Livre1}. Next, it follows from 
\cite[Proposition~24.11 and Example~29.19(i)]{Livre1} that
\begin{equation}
\label{e:kj88}
\begin{cases}
\prox_{\gamma\boldsymbol{F}}\colon (\mathsf{w},\mathsf{z})
\mapsto\big(\prox_{\gamma\mathsf{f}}\mathsf{w},
\prox_{\gamma\mathsf{g}}\mathsf{z}\big)\\
\prox_{\gamma\boldsymbol{G}}\colon
(\mathsf{x},\mathsf{h})\mapsto 
(\mathsf{w},\mathsf{Lw}),
\;\text{where}\;\;
\mathsf{w}=\mathsf{x}-\mathsf{L}^\top
\big(\Id+\mathsf{LL}^\top\big)^{-1}
(\mathsf{Lx}-\mathsf{h}).
\end{cases}
\end{equation}
Now define
\begin{equation}
\label{e:nancago8}
\mathsf{R}=\mathsf{L}^\top(\Id+\mathsf{LL}^\top)^{-1}
\quad\text{and}\quad
(\forall k\in\NN)\quad 
\begin{cases}
\boldsymbol{u}_k=(\mathsf{w}_k,\mathsf{z}_k)\\
\boldsymbol{v}_k=(\mathsf{x}_k,\mathsf{h}_k)\\
\boldsymbol{w}_k=(\mathsf{c}_k,\mathsf{d}_k).
\end{cases}
\end{equation}
Then we derive from \eqref{e:kj88} that,
given $\mathsf{x}_0\in\RR^{N+p}$ and 
$\mathsf{h}_0\in\RR^{N+n+m}$, \eqref{e:kj18} becomes
\begin{equation}
\label{e:hall62}
\begin{array}{l}
\text{for}\;k=0,1,\ldots\\
\left\lfloor
\begin{array}{l}
\mathsf{q}_k=\mathsf{Lx}_k-\mathsf{h}_k\\
\mathsf{w}_{k}=\mathsf{x}_k-\mathsf{Rq}_k\\
\mathsf{z}_{k}=\mathsf{L}\mathsf{w}_{k}\\
\mathsf{c}_{k}=\prox_{\gamma\mathsf{f}}
(2\mathsf{w}_k-\mathsf{x}_k)\\
\mathsf{d}_{k}=\prox_{\gamma\mathsf{g}}
(2\mathsf{z}_k-\mathsf{h}_k)\\
\mathsf{x}_{k+1}=\mathsf{x}_k+\mu_k(\mathsf{c}_k-
\mathsf{w}_k)\\
\mathsf{h}_{k+1}=\mathsf{h}_k+\mu_k(\mathsf{d}_k-\mathsf{z}_k).
\end{array}
\right.\\[2mm]
\end{array}
\end{equation}
Let us partition the vectors appearing in \eqref{e:hall62}
according to their scale and regression components as
\begin{equation}
\label{e:nancago11}
(\forall k\in\NN)\quad
\begin{cases}
\mathsf{x}_k=(x_{s,k},x_{b,k})\in\RR^N\times\RR^p\\ 
\mathsf{h}_k=(h_{s,k},h_{b,k})\in\RR^N\times\RR^{n+m}\\
\mathsf{q}_k=(q_{s,k},q_{b,k})\in\RR^N\times\RR^{n+m}\\
\mathsf{w}_k=(s_k,b_k)\in\RR^N\times\RR^p\\
\mathsf{z}_k=(s_k,z_{b,k})\in\RR^N\times\RR^{n+m}\\ 
\mathsf{c}_k=(c_{s,k},c_{b,k})\in\RR^N\times\RR^p\\ 
\mathsf{d}_k=(d_{s,k},d_{b,k})\in\RR^N\times\RR^{n+m}.
\end{cases}
\end{equation}
In terms of these new variables, 
using the matrix $Q$ of \eqref{e:nancago12}, 
\eqref{e:20} and \eqref{e:nancago8} yield
\begin{equation}
\label{e:trenet2}
(\forall k\in\NN)\quad
\mathsf{R}\mathsf{q}_k=\big(q_{s,k}/2,Qq_{b,k}\big),
\end{equation}
and it follows from \eqref{e:nancago12}, \eqref{e:trenet}, 
\eqref{e:prox1}, \eqref{e:prox2}, and \eqref{e:trenet2}
that \eqref{e:hall62} is precisely \eqref{e:9hyR09}. Altogether,
since $(\boldsymbol{u}_k)_{k\in\NN}=
(\mathsf{w}_k,\mathsf{z}_k)_{k\in\NN}$ 
converges to a solution to \eqref{e:prob3}, 
$(\mathsf{w}_k)_{k\in\NN}=(s_k,b_k)_{k\in\NN}$ converges 
to a solution to Problem~\ref{prob:1}.

\section*{Appendix~B -- Proof of Proposition~\ref{p:2}}

\begin{itemize}
\item
If $s\notin\RP^N$, then 
\eqref{e:perspective} yields $(\forall b\in\RR^p)$ 
$\mathsf{g}(s,Ab)=\pinf$. On the other hand if, for some
$i\in\{1,\ldots,N\}$, $\sigma_i\in\RPP$ then
we deduce from \ref{p:2i} that $(\forall b\in\RR^p)$
$\widetilde{\varphi}_i(\sigma_i,X_ib-y_i)=
\sigma_i\theta_i((X_ib-y_i)/\sigma_i)+\alpha_i\sigma_i
\geq\alpha_i\sigma_i\to\pinf$ as $\sigma_i\to\pinf$.
Hence, \ref{p:2ii} entails that $(\forall b\in\RR^p)$ 
$\mathsf{g}(s,Ab)\to\pinf$ as $\|s\|_2\to\pinf$ while $s\in\RP^N$.
On the other hand, it follows from \ref{p:2iii} that
$(\forall s\in\RR^N)(\forall b\in E)$ 
$\mathsf{g}(s,Ab)\geq\psi_j(L_jb)\to\pinf$ as $\|b\|_2\to\pinf$.
Altogether, \eqref{e:coer} holds.
\item
It follows from \ref{p:2i} and \eqref{e:perspective} that
$(\forall i\in\{1,\ldots,N\})$ 
$\reli\dom\widetilde{\varphi}_i=\RPP\times\RR^{n_i}$.
Furthermore, \ref{p:2ii} yields
$(\forall i\in\{1,\ldots,M\})$ 
$\reli\dom{\psi}_i=\RR^{m_i}$. Therefore
$\reli\dom\mathsf{g}=\RPP^N\times\RR^n\times\RR^m$.
Since trivially $A(E)\subset\RR^{n+m}$, \eqref{e:cq} reduces
to \ref{p:2iv}.
\end{itemize}
\end{document}